\documentclass[11pt]{article}%graphicx,color

\RequirePackage[l2tabu, orthodox]{nag}

\usepackage[affil-it]{authblk}

\usepackage{stmaryrd}
\usepackage{amscd}
\usepackage[all]{xy}
\usepackage{pb-diagram}
\usepackage{ascmac}
\usepackage{hyperref}
\usepackage{amsrefs}
\usepackage[a4paper]{geometry}
\usepackage{docmute}
\usepackage{enumerate}
\usepackage{comment}
\usepackage{amsmath,amssymb}
\usepackage{amsthm}
\usepackage{mathrsfs}%\mathscr
\usepackage{bm}%\bm
\usepackage{mathtools}%\coloneqq
\usepackage{colonequals}%\colonequals
\usepackage{graphicx}
\usepackage{subcaption}
\usepackage{tikz}
\usetikzlibrary{patterns}
\usepackage{tikz-cd}
\usepackage{cancel}
\usepackage{centernot}%\centernot
\usepackage{xparse}%\NewDocumentCommand
\usepackage{array,booktabs,float}
\usepackage{textcomp}
\usepackage{gensymb}%\degree
\usepackage{bbm}%\mathbbm
\usepackage{cleveref}

\DeclareFontFamily{U}{matha}{\hyphenchar\font45}
\DeclareFontShape{U}{matha}{m}{n}{
<5> <6> <7> <8> <9> <10> gen * matha
<10.95> matha10 <12> <14.4> <17.28> <20.74> <24.88> matha12
}{}

\DeclareSymbolFont{matha}{U}{matha}{m}{n}
\DeclareMathSymbol{\divides}{3}{matha}{"17}
\DeclareMathSymbol{\notdivides}{3}{matha}{"1F}

\theoremstyle{plain}
\newtheorem{Def}{Definition}[section]
\newtheorem*{Def*}{Definition}%def
\newtheorem{thm}[Def]{Theorem}
\newtheorem{cor}[Def]{Corollary}
\newtheorem*{cor*}{Corollary}

\newtheorem*{thm*}{Theorem}

\newtheorem*{prop*}{Proposition}
\newtheorem{lem}[Def]{Lemma}
\newtheorem*{lem*}{Lemma}

\newtheorem*{rem*}{Remark}
\newtheorem{ex}[Def]{Example}
\newtheorem*{ex*}{Example}

\newtheorem{que}{Question}

\newcommand{\Z}{\mathbb{Z}}
\newcommand{\Q}{\mathbb{Q}}

\newcommand{\C}{\mathbb{C}}
\newcommand{\F}{\mathbb{F}}

\newcommand{\abs}[1]{\left\lvert#1\right\rvert}

\newcommand{\id}{\mathrm{id}}

\newcommand{\Bil}{\mathrm{Bil}}
\newcommand{\Quad}{\mathrm{Quad}}
\newcommand{\QVQZ}{\mathrm{Quad} (V, \mathbb{Q}/\mathbb{Z})}
\newcommand{\Qz}{\mathrm{Quad}_0 (V, \mathbb{Q}/\mathbb{Z})}
\newcommand{\QnG}{\mathrm{Quad} (\theta (V^n), \mathbb{Q}/\mathbb{Z})_G}
\newcommand{\QG}{\mathrm{Quad}_0 (\theta (V^n), \mathbb{Q}/\mathbb{Z})_G}

\newcommand{\GL}{\mathrm{GL}}
\newcommand{\hwe}{\mathrm{hwe}}
\newcommand{\cwe}{\mathrm{cwe}}
\newcommand{\fwe}{\mathrm{fwe}}
\newcommand{\CW}{\mathcal{C}}
\newcommand{\wt}{\mathrm{wt}}
\newcommand{\QZ}{\Q/\Z}
\newcommand{\ceq}{\coloneqq}

\newcommand{\rv}[2]{{#1} = ({#1}_1, \dots, {#1}_{#2})}

\title{SELF-DUAL CODES WITH GROUP ACTIONS AND INVARIANTS}
\author{FUTO TAKABAYASHI \thanks{\texttt{wismuth83@keio.jp}}}
\affil{Department of Mathematics, Faculty of Science and Technology, Keio University, Japan}
\date{}

\begin{document}

\maketitle

\begin{abstract}

In this paper, we define dual codes over arbitrary finite rings with respect to arbitrary bilinear forms and provide a generalization of Hayden's theorem (Bridges, Hall, and Hayden, 1981). 
Building on this foundation, we introduce the concept of $G$-dual codes for codes invariant under a permutation group $G$, referred to as $G$-codes. 
We then present several generalizations of Atsumi's MacWilliams identity (Atsumi, 1995; Chakraborty and Miezaki, 2023) for $G$-codes over finite rings with respect to general bilinear forms. 
Furthermore, we establish a $G$-analogue of the MacWilliams identity for $G$-full weight enumerators and introduce the notions of $G$-quadratic maps and $G$-representations for twisted modules, twisted rings, quadratic pairs, and form rings. 
By defining transformation groups for $G$-full weight enumerators, we extend the theory of Clifford--Weil groups (Nebe, Rains, and Sloane, 2004, 2006). 
Finally, we provide generalizations of Gleason-type theorems for these weight enumerators, demonstrating that the $G$-full weight enumerators of $G$-self-dual and $G$-isotropic codes are invariant under the Clifford--Weil groups and span the invariant subspaces of these groups. 

\end{abstract}

\vspace{1em}
\noindent \textbf{Keywords:} Invariant theory, Coding theory, Weight enumerators, $G$-codes, Clifford--Weil groups.

%\vspace{1em}

\section*{Statements and Declarations}

\subsection*{Competing Interests}

The authors have no relevant financial or non-financial interests to disclose.

\subsection*{Funding}

This work was supported by The Keio University Doctorate Student Grant-in-Aid Program from Ushioda Memorial Fund 2025.

%%%%%%%%%%%%%%%%%%%%%%%%%%%%%%%%%%%%%%%%%%%%%%%%%%%%%%%%%%%%%%%%%%%%%%%%%%%%

\section{Introdution}

In coding theory, there are numerous studies of codes equipped with group actions. 
In particular, the properties of codes equipped with permutations are important, as permutations act as automorphisms of codes. 

Yoshida \cite{Y93} showed a generalized MacWilliams identity for codes over $\F_q$ that are invariant under a permutation group $G$, or $G$-codes over $\F_q$. 
Atsumi \cite{A95} gave an analogue of the identity for $G$-codes over $\F_q$ using Hayden's theorems \cite[Theorem 4.4, Corollary 1]{BHH81}. 
Later, Chakraborty and Miezaki presented several generalizations of Atsumi's MacWilliams identity for $G$-codes over finite Frobenius rings. 
Each identity describes a relation between $G$-code and its dual codes with respect to a bilinear form given by the Euclidean inner product. 

In this paper, we define dual codes over arbitrary finite rings with respect to arbitrary bilinear forms, and introduce the notion of $G$-dual codes for $G$-codes. In addition, we present several generalizations of Atsumi's MacWilliams identity for $G$-codes over finite rings with respect to bilinear forms taking values in $\QZ$. 
More precisely, we prove the following theorems:

\begin{thm}{(Theorem \ref{hweMac})}

Let $M$ be a set of nondegenerate $\QZ$-valued $\Z$-bilinear forms on $V$ (as defined in \ref{bil}), $G$ a subgroup of the symmetric group of degree $n$, and $C$ a $G$-code. Then the $G$-Hamming weight enumerator $\hwe_G (\theta C) (x, y)$ of $\theta C$ (as defined in \ref{we}) satisfies 
\[
 \hwe_G (\theta C) (x, y) = \frac{1}{\abs{\theta (C^{\perp_M})}} \hwe_G (\theta (C^{\perp_M})) (x + (\abs{V} - 1) y, x - y). 
\]

\end{thm}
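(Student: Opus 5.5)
The plan is to reduce the identity to the generalized Hayden theorem established earlier, combined with a $G$-equivariant Fourier (Poisson summation) argument. Throughout, $\theta$ sends a word of $V^n$ to its $G$-orbit, $\theta C$ is the set of $G$-orbits contained in $C$, and $\hwe_G(\theta C)(x,y)=\sum_{O\in\theta C} x^{n-\wt(O)}y^{\wt(O)}$. This is well defined because the Hamming weight is constant on $G$-orbits.

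\emph{Step 1 (character set-up).} Nondegeneracy of the forms in $M$ gives a $G$-equivariant identification of $V^n$ with its character group $\widehat{V^n}=\mathrm{Hom}(V^n,\QZ)$. Under this identification the annihilator of $C$ is $C^{\perp_M}$, and $V^n/C^{\perp_M}\cong \widehat C$ as $G$-sets. I would first check that $C^{\perp_M}$ is a $G$-code whenever $C$ is, using that $G$ permutes coordinates and the forms are applied coordinatewise. This makes $\theta(C^{\perp_M})$ meaningful.

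\emph{Step 2 (orbitwise Poisson summation).} For each $G$-stable subset $S\subseteq V^n$, let $\mathbf 1_S$ be its indicator function. The ordinary MacWilliams argument is Poisson summation for the product function $f(u)=\prod_i f_i(u_i)$, with $f_i(0)=x$ and $f_i(v)=y$ for $v\neq 0$. Its Fourier transform is again of product form, in the variables $x+(\abs V-1)y$ and $x-y$. The key point I want is that the generalized Hayden theorem, in its refined form, says more than that $G$ has equally many orbits on $C$ and on $\widehat C$: the permutation characters of $G$ on $C$ and on $\widehat C\cong V^n/C^{\perp_M}$ coincide. This is a Brauer permutation lemma type statement. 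From it I would deduce the following equality of $G$-modules:
\[
\mathbb C[C]\cong \mathbb C[\widehat C] .
\]
The Fourier transform $\mathbb C[C]\to\mathbb C[\widehat C]$ realizes this isomorphism $G$-equivariantly.

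\emph{Step 3 (pass to orbits).} I would write $\hwe_G(\theta C)=\sum_{c\in C}f(c)/\abs{Gc}$, and apply the $G$-averaging (Reynolds) operator, which commutes with the Fourier transform by Step 2. Burnside's lemma then gives
\[
\hwe_G(\theta C)=\frac1{\abs G}\sum_{g\in G}\sum_{c\in C^g}f(c),
\]
where $C^g$ is the set of codewords fixed by $g$. For the dual side I would match fixed points of $g$ on $C$ with fixed points of $g$ on $\widehat C$, which is exactly what Hayden's theorem controls. The normalization $1/\abs{\theta(C^{\perp_M})}$ should come out as the ratio of the orbit counts. This is where the generalized Hayden theorem enters a second time: it gives $\abs{\theta C}$ in terms of $\abs{\theta(C^{\perp_M})}$ and the orbit count of the ambient space. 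As a check, evaluating both sides at $x=y=1$ should recover exactly that orbit-count relation.

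\emph{Main obstacle.} I expect the hardest step to be making the Fourier transform compatible with the \emph{orbit} weighting $1/\abs{Gc}$ while keeping the weight grading. The weighting is not multiplicative, so naive Poisson summation does not apply directly. Fixed-point subcodes $C^g$ also carry cycle-length-weighted enumerators, which do not transform by the plain substitution $(x,y)\mapsto(x+(\abs V-1)y,\,x-y)$. My first attempt would therefore avoid summing over $g$. Instead I would stratify $C$ by Hamming weight $w$, apply the refined Hayden theorem to the $G$-sets $\{c\in C:\wt(c)=w\}$ through the weight-homogeneous components of the Fourier kernel, and then recombine over $w$. If the homogeneous components fail to be $G$-submodules matched in this way, the fallback is to follow Atsumi's original argument and use Hayden's Corollary 1 in its generalized form over rings with arbitrary forms.
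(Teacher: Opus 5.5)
\textbf{There is a genuine gap: the proposal is built on a misreading of the objects in the statement, and under that reading the identity is false.} So no version of your Steps 2--3 can work.

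In the paper, $\theta=\frac{1}{\abs{G}}\sum_{g\in G}g$ is the averaging idempotent of $R[G]$; this is why $\abs{G}$ is assumed to be a unit in $R$. Hence $\theta C$ is the submodule of $G$-fixed codewords of $C$, not the set of $G$-orbits in $C$. Likewise, $\hwe_G$ is homogeneous of degree $t$, the number of \emph{coordinate} orbits. Its weight $\wt_G(u)$ counts the orbits $G\alpha_i$ on which the fixed vector $u$ is nonzero; it is not the ordinary Hamming weight in degree $n$.

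With your definitions, take $R=V=\F_3$, $n=2$, $G=S_2$, $\beta(a\otimes b)=\frac{ab}{3}$ and $C=\F_3^2$. The six orbits in $C$ give $x^2+2xy+3y^2$, while $C^{\perp_M}=\{0\}$ gives $(x+2y)^2$. With the paper's definitions, $\theta C=\{(c,c)\mid c\in\F_3\}$, $t=1$, and both sides equal $x+2y$. Your own check at $x=y=1$ would require $\abs{C/G}\cdot\abs{C^{\perp_M}/G}=\abs{V}^n$, which fails whenever $G$ acts nontrivially.

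This is why the obstacle you flag cannot be overcome: the non-multiplicative weighting $1/\abs{Gc}$ and the cycle-weighted enumerators of the subcodes $C^g$ lead to no correct identity of this shape. The equality of permutation characters on $C$ and $\widehat C$ that you invoke is true, but it is not what the generalized Hayden theorem says here, and it carries no information about weight distributions.

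The paper's proof needs none of the Fourier-on-orbits machinery. It proceeds in three steps:
\begin{enumerate}
\item The isomorphism $u\mapsto u_G$ identifies $\theta(V^n)$ with $V^t$. It turns $\wt_G$ into the ordinary Hamming weight and $\beta^n_G$ into the orthogonal sum $\beta^t$. The ordinary MacWilliams identity on $V^t$ then gives the identity with $(\theta C)^{(\perp_G)_M}$ in place of $\theta(C^{\perp_M})$ (Lemma \ref{GMac}).
\item Hayden's theorem enters as the decomposition $(\theta C)^{\perp_M}=\ker\theta\oplus\theta(C^{\perp_M})$ (Corollary \ref{Snhay}, using $G^T=G$). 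For fixed vectors one has $\beta^n(v\otimes u)=\sum_{i=1}^t m_{\alpha_i}\beta(v_{\alpha_i}\otimes u_{\alpha_i})$. Combining the two yields $(\theta C)^{(\perp_G)_M}=\theta(C^{\perp_M})M_G$ (Lemma \ref{dual}).
\item Right multiplication by $M_G$ scales the $i$-th orbit block by $m_{\alpha_i}$, which divides $\abs{G}$ and is therefore a unit. So it is a bijection $\theta(C^{\perp_M})\to\theta(C^{\perp_M})M_G$ that preserves $\wt_G$. This lets you replace both the size and the enumerator of the $G$-dual by those of $\theta(C^{\perp_M})$.
\end{enumerate}
These are exactly the ingredients your plan lacks: the reduction to $V^t$, the $\ker\theta$ decomposition, and the orbit-length rescaling.
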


\begin{thm}{(Theorem \ref{fweMac})}

Let $M$ be a set of nondegenerate $\QZ$-valued $\Z$-bilinear forms on $V$ (as defined in \ref{bil}), $G$ a subgroup of the symmetric group of degree $n$, and $C$ a $G$-code. Then the $G$-full weight enumerator (as defined in \ref{we}) satisfies the following identity in $\C[V^t]$: 
\[
 \fwe_G ((\theta C)^{(\perp_G)_M}) 
= \frac{1}{\abs{\theta C}} \sum_{v \in \theta (V^n)} \sum_{u \in \theta C} \exp (2 \pi \sqrt{-1} \beta^n_G (v, u)) e_v. 
\]

\end{thm}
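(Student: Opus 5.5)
The plan is to establish, for each orbit $v\in\theta(V^n)$, the orthogonality relation
\[
\sum_{u\in\theta C}\exp\bigl(2\pi\sqrt{-1}\,\beta^n_G(v,u)\bigr)=
\begin{cases}\abs{\theta C}, & v\in(\theta C)^{(\perp_G)_M},\\ 0, & \text{otherwise}.\end{cases}
\]
The identity follows from this at once. Since $\fwe_G(D)=\sum_{v\in D}e_v$, multiplying by $e_v$, summing over $v\in\theta(V^n)$ and dividing by $\abs{\theta C}$ leaves exactly $\sum_{v\in(\theta C)^{(\perp_G)_M}}e_v$.

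\textbf{First case.} I will unwind the definition of $\beta^n_G$ on $\theta(V^n)\times\theta(V^n)$ as the pairing induced from the forms in $M$ on orbit representatives. I will then check that $v\in(\theta C)^{(\perp_G)_M}$ means precisely that $\beta^n_G(v,u)=0$ in $\QZ$ for every $u\in\theta C$. In that case every summand equals $1$, and the sum is $\abs{\theta C}$.

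\textbf{Second case.} Suppose $v\notin(\theta C)^{(\perp_G)_M}$. The orbits $\theta C$ do not form a group, so the usual argument "a nontrivial character sums to zero" cannot be applied directly. I plan to pass to the generalized Hayden theorem proved earlier for arbitrary finite rings and nondegenerate $\QZ$-valued bilinear forms.

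Nondegeneracy of the forms in $M$ identifies $V^n$ with its character group $\mathrm{Hom}(V^n,\QZ)$ in a $G$-equivariant way. Under this identification, $C^{\perp_M}$ is the annihilator of $C$. Hayden's theorem, applied to $G$ acting on $C$ and on $V^n/C^{\perp_M}\cong\widehat{C}$, then gives two facts. The number of $G$-orbits on $C$ equals the number of $G$-orbits on the dual side. Moreover, the square matrix of orbit-summed character values, indexed by these orbits, is invertible and satisfies a column-orthogonality relation.

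I will express the sum over $u\in\theta C$ through this matrix. The row corresponding to the class of $v$ should then be orthogonal to the row of the trivial orbit unless $v$ lies in the trivial class modulo $C^{\perp_M}$, that is, unless $v$ is $G$-orthogonal to $C$. This gives the vanishing.

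\textbf{Main obstacle.} The hard step is precisely this vanishing in the second case. It requires matching the normalization in the definition of $\beta^n_G$ (orbit representatives versus averages over $G$) with the form of the Hayden orthogonality. If a direct match fails, my fallback is to lift the orbit sum to a sum over all of $C$, weighting each $u\in C$ by $1/\abs{Gu}$ and writing this weight as $\abs{\mathrm{Stab}(u)}/\abs{G}$. Burnside-type counting then converts the expression into an average over $g\in G$ of the ordinary character sums $\sum_{c\in C}\exp(2\pi\sqrt{-1}\,\beta(gv,c))$. Each of these sums vanishes unless $gv\in C^{\perp_M}$, and since $C$ is a $G$-code this condition is independent of $g$.
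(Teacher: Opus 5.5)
\textbf{There is a genuine gap.} Your first case and the final summation over $v$ are fine. The second case, however, rests on a misreading of $\theta C$, and the machinery you bring in to handle it does not work.

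In the paper, $\theta=\frac{1}{\abs{G}}\sum_{g\in G}g$ is the averaging idempotent of $R[G]$, and $\theta C=\{\theta u\mid u\in C\}$ is its image, not a set of orbits. Since $\abs{G}$ is a unit in $R$ and $C$ is a $G$-stable $R$-module, $\theta C$ is exactly the $R$-submodule of $G$-fixed vectors of $C$. Likewise, $\theta(V^n)$ is the module of vectors constant on each coordinate orbit $G\alpha_i$, identified with $V^t$ by the $R$-isomorphism $u\mapsto u_G$.

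Because $\beta^n_G(v\otimes u)=\beta^t(v_G\otimes u_G)$ and $(-)_G$ is additive, $\chi_v(u)=\exp(2\pi\sqrt{-1}\,\beta^n_G(v,u))$ is a genuine character of the finite abelian group $\theta C$. It is trivial exactly when $v\in(\theta C)^{(\perp_G)_M}$. If it is nontrivial, pick $u_0\in\theta C$ with $\chi_v(u_0)\neq 1$. The bijection $u\mapsto u+u_0$ of $\theta C$ gives $\sum_u\chi_v(u)=\chi_v(u_0)\sum_u\chi_v(u)$, so the sum vanishes. That is the paper's whole proof; neither Hayden's theorem nor the nondegeneracy of $M$ is used.

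The detour you propose would not close the gap, for two reasons.
\begin{itemize}
\item What you attribute to Hayden's theorem (equal numbers of $G$-orbits on $C$ and on $\widehat{C}$, and an invertible orbit character matrix with column orthogonality) is not Theorem \ref{hay}. That theorem only gives $(\theta C)^{\perp_M}=\ker\theta^T\oplus\theta^T(C^{\perp_M})$, so you would be relying on something not established here.
\item Your fallback computes a different quantity. A sum over $C$ weighted by $1/\abs{Gu}$ is a sum over $G$-orbits of $C$, not over $\theta C$. For example, with $G$ the full symmetric group and $C=V^n$, $\theta C$ consists of the $\abs{V}$ constant vectors, while $C$ has $\binom{\abs{V}+n-1}{n}$ orbits. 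The fallback also pairs with $\beta^n$ rather than $\beta^n_G$. Accordingly, its vanishing criterion $gv\in C^{\perp_M}$ cuts out $\theta(C^{\perp_M})$ inside $\theta(V^n)$. The set that must appear is $(\theta C)^{(\perp_G)_M}=\theta(C^{\perp_M})M_G$ by Lemma \ref{dual}, which in general is a different submodule when the orbit lengths $m_{\alpha_i}$ vary.
\end{itemize}

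Finally, a caveat you share with the paper: one character sum detects orthogonality only for the single form $\beta$ in the exponent. So the identity, and your first case, should be read with $M=\{\beta\}$.
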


Gleason-type theorems 
(\cite{Gleason70, BMS72, MMS72, HO99, BDHO99}, and \cite[Chapter 19]{MS77}) 
are well-known and important results in coding theory and invariant theory. They state that the rings generated by the complete, Hamming, or other weight enumerators of self-dual codes satisfying certain algebraic conditions coincide with the invariant rings of specific associated groups. 
Nebe, Rains, and Sloane \cite{NRS04, NRS06} introduced the notions of full weight enumerators and Clifford--Weil groups. 
They showed that the full weight enumerators of self-dual and isotropic codes are invariant under the Clifford--Weil groups. 
Furthermore, they demonstrated that under quite general conditions, the space of invariant vectors of the Clifford--Weil groups is spanned by the full weight enumerators of self-dual and isotropic codes. 
Since the complete and the Hamming weight enumerators can be obtained as projections of the full weight enumerators, the transformation groups for the former can be drived from the Clifford--Weil groups. 
Based on these considerations, Nebe, Rains, and Sloane estimated the transformation groups under which the weight enumerators of self-dual codes are invariant to be the Clifford--Weil groups. 

There have been several generalizations and applications of their studies, 
such as those in \cite{NS23, GNR08, BOZ21, NRS01}. 
In this paper, we present a generalization of their theory to $G$-codes. 
We introduce some transformation groups of the $G$-full weight enumerators, 
thereby extending the definition of the Clifford--Weil groups. 
Moreover, we provide generalizations of Gleason-type theorems for $G$-full weight enumerators to show the invariance and the generation of these enumerators for $G$-self-dual and $G$-isotropic codes under the Clifford--Weil groups. 
More precisely, we prove the following theorems:

\begin{thm}{(Theorem \ref{parainv})}

Let $G$ be a subgroup of the symmetric group of degree $n$, $(M, \Phi)$ 
a quadratic pair on $R$ and $\rho_G = (\theta (V^n), (\rho_G)_M, (\rho_G)_{\Phi})$ a $G$-representation of $(M, \Phi)$. 
Then, the vectors $\fwe_G (\theta C)$, where $C$ ranges over all $G$-isotropic codes (with respect to $\rho_G$), span the invariant subspace of $\C [\theta (V^n)]$ under the action of $P (\rho_G)$. That is, we have 
\[
 \C [\fwe_G (\theta C) \mid \text{$C$ is a $G$-isotropic code (with respect to $\rho_G$)}] = \C [\theta (V^n)]^{P (\rho_G)}. 
\]
\end{thm}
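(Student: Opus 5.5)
The plan follows the Nebe--Rains--Sloane proof that the parabolic subgroup's invariants are spanned by full weight enumerators of isotropic codes (NRS, Theorem 5.4.? ``parabolic'' case), transported to the module $W \ceq \theta(V^n)$. We take $P(\rho_G)$ to be generated by two kinds of operators on $\C[W]$. The first are the operators $m_r\colon e_v \mapsto e_{(\rho_G)_M(r)\,v}$ coming from invertible elements (or the unit group of the relevant matrix ring) acting on $W$. The second are the diagonal operators $d_\varphi\colon e_v \mapsto \exp(2\pi\sqrt{-1}\,(\rho_G)_\Phi(\varphi)(v))\, e_v$ for $\varphi\in\Phi$. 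The proof has two halves: invariance and spanning.

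\emph{Invariance.} Let $C$ be $G$-isotropic, so $\theta C\subseteq W$ is a submodule. The $m_r$ permute the elements of $\theta C$ because $\theta C$ is stable under the $R$-action transported by $\rho_G$. The $d_\varphi$ act trivially on each $e_u$ with $u\in\theta C$, since isotropy means exactly $(\rho_G)_\Phi(\varphi)(u)=0$ for all $\varphi\in\Phi$ and $u\in\theta C$. Hence $\fwe_G(\theta C)=\sum_{u\in\theta C}e_u$ is fixed, which gives the inclusion $\subseteq$.

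\emph{Spanning.} Take $f=\sum_v f(v)e_v$ invariant. Invariance under all $d_\varphi$ forces $f(v)=0$ unless $v$ is isotropic, i.e.\ unless $(\rho_G)_\Phi(\varphi)(v)=0$ for all $\varphi$. Invariance under the $m_r$ forces $f$ to be constant on orbits of the unit action. So $f$ is a combination of orbit sums $O_v$ over isotropic $v$.

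Now consider the cyclic submodule $\langle v\rangle = R v$ generated by an isotropic $v$ in $W$. I would show it is totally isotropic (it is $\theta$ of a $G$-isotropic code). This step uses that $\Phi$ is a quadratic pair: $\varphi(rv)=\varphi'(v)$ for some $\varphi'\in\Phi$ via the twisted action, and the associated bilinear forms $\lambda(\varphi)$ vanish on $(rv,sv)$ by polarization. I expect to verify the polarization identity $\varphi(v+w)-\varphi(v)-\varphi(w)=\beta(v,w)$ carefully in the $G$-setting; that bilinear forms in $(\rho_G)_M$ are compatible with $(\rho_G)_\Phi$ is presumably part of the definition of a $G$-representation.

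I then triangularize. The elements of $\langle v\rangle$ generating it are exactly the unit-orbit of $v$ (the standard fact that for a finite ring, $Rv=Rw$ implies $w=uv$ for a unit $u$; this is where I'd need $R$ finite so that the ring is semiperfect, and is the main obstacle, possibly requiring the paper's earlier lemma or a restriction to finite rings where it holds). Consequently
\[
\fwe_G(\langle v\rangle)=\sum_{D\subseteq \langle v\rangle\ \text{cyclic}} O_{D}.
\]
Ordering cyclic isotropic submodules by inclusion, this system is unitriangular. Möbius inversion then expresses each $O_v$ as a combination of $\fwe_G$ of isotropic cyclic submodules. Finally, I must check that each such submodule is $\theta C$ for a $G$-code $C$. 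This requires that $\theta$ be a bijection between $G$-codes and submodules of $W$, which I would take from the earlier definition of $\theta$ (orbit/fixed-point identification of $G$-invariant vectors). Together these give the inclusion $\supseteq$.
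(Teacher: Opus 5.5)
Your invariance half coincides with the paper's proof. So does the first reduction in the spanning half: an invariant $a$ is constant on $R^*$-orbits and supported on vectors $u$ with $(\rho_G)_\Phi(\phi)(u)=0$ for all $\phi\in\Phi$. The routes split at the last step, and there yours is the one that actually works.

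The paper invokes Lemma \ref{Ru} in the form ``$v\in R^*w \iff v\in Rw$'', which is false for $v=0\neq w$. From it the paper infers that $a$ is constant on all of $Ru$ and concludes $a=\sum_u a_u\fwe_G(Ru)$. This identity already fails for $a=\fwe_G(\theta C)$ with $C$ $G$-isotropic and $\theta C\neq 0$: the coefficient of $e_0$ is $1$ on the left and $\abs{\theta C}$ on the right.

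You instead use the correct statement $Rv=Rw\iff w\in R^*v$. For finite $R$ it follows from $w=av$, $v=bw$: then $Ra+R(1-ba)=R$, and stable range one of the semilocal ring $R$ yields a unit $u=a+y(1-ba)$ with $uv=w$. Hence $Rv$ is partitioned into the $R^*$-orbits of generators of its cyclic submodules, giving $\fwe_G(Rv)=\sum_{D\subseteq Rv}O_D$. You then invert this unitriangular system, $O_{Rv}=\sum_{D\subseteq Rv}\mu(D,Rv)\fwe_G(D)$. That inversion is exactly the idea missing from the paper's shortcut; it costs one M\"obius step but genuinely proves the reverse inclusion.

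Two small repairs are needed.

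First, self-orthogonality of $Rv$ does not follow from polarizing $\lambda(\phi)$. That only kills the forms in the image of $\lambda$, such as those of $m+\tau(m)$. Use instead the map $\{\cdot\}\colon M\to\Phi$ of the quadratic pair: $(\rho_G)_M(m)(rv\otimes sv)=(\rho_G)_M(m(r\otimes s))(v\otimes v)=(\rho_G)_\Phi(\{m(r\otimes s)\})(v)=0$.

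Second, you need no bijection between $G$-codes and submodules of $\theta(V^n)$, and there is none. Every $R$-submodule $D\subseteq\theta(V^n)$ is fixed pointwise by $G$ because $g\theta=\theta$. So $D$ is itself a $G$-code with $\theta D=D$.
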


\begin{thm}{(Theorem \ref{CWinv})}

Let $G$ be a subgroup of the symmetric group of degree $n$, $(R, M, \psi, \Phi)$ a form ring, and $\rho_G = (\theta (V^n), (\rho_G)_M, (\rho_G)_{\Phi}, \beta)$ a $G$-representation of $(R, M, \psi, \Phi)$. If $C$ is a $G$-code that is both $G$-self-dual and $G$-isotropic (with respect to $\rho_G$), then the $G$-full weight enumerator $\fwe_G (\theta C)$ is invariant under the Clifford--Weil group $\CW (\rho_G)$. 

\end{thm}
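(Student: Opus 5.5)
The plan follows the strategy of Nebe--Rains--Sloane: show that $\fwe_G(\theta C)$ is fixed by each generator of $\CW(\rho_G)$. By construction (extending \cite{NRS04, NRS06}), $\CW(\rho_G)$ is generated by three kinds of elements acting on $\C[\theta(V^n)]$. The first are the diagonal elements $d_\phi$ attached to $\phi \in \Phi$, with $d_\phi e_v = \exp(2\pi\sqrt{-1}\,(\rho_G)_\Phi(\phi)(v))\, e_v$. The second are the permutation-type elements $m_r$ attached to units $r$ of $R$, with $m_r e_v = e_{\theta(vr)}$. The third are the Fourier-type elements $h_{e,u_e,v_e}$ attached to symmetric idempotents $e$. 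These generate the group, and all of them act linearly, so it suffices to check invariance generator by generator. I will write $\fwe_G(\theta C) = \sum_{c \in \theta C} e_c$, following the definition in \ref{we}.

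\textbf{The elements $m_r$.} A $G$-code $C$ is an $R$-submodule of $V^n$ that is stable under $G$. The map $\theta$ (taking $G$-orbits) commutes with the right action of $R$. Hence $v \mapsto vr$ for a unit $r$ induces a bijection of $\theta C$ onto itself, and $\fwe_G(\theta C)$ is fixed.

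\textbf{The elements $d_\phi$.} This is where the $G$-isotropy hypothesis is used. By the definition of a $G$-isotropic code with respect to $\rho_G$, $(\rho_G)_\Phi(\phi)$ vanishes on $\theta C$ for every $\phi\in\Phi$. So every coefficient $\exp(2\pi\sqrt{-1}\cdot 0)=1$, and $d_\phi$ fixes $\fwe_G(\theta C)$.

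\textbf{The elements $h_{e,u_e,v_e}$ (main obstacle).} Decompose $V=eV\oplus(1-e)V$. The element $h$ acts as a partial Fourier transform on the $e$-component: its kernel is $\exp(2\pi\sqrt{-1}\,\beta^n_G(\cdot,\cdot))$ applied through $u_e$ and $v_e$, normalised by $\abs{\theta(eV^n)}^{-1/2}$, and it acts as the identity on the $(1-e)$-component. For $e=1$, $h$ is the full Fourier transform, and the $G$-MacWilliams identity (Theorem \ref{fweMac}) turns $\fwe_G(\theta C)$ into
\[
\abs{\theta C}^{-1}\sum_{v}\sum_{u\in\theta C}\exp(2\pi\sqrt{-1}\beta^n_G(v,u))e_v .
\]
This equals $\frac{\abs{\theta(V^n)}}{\abs{\theta C}}\fwe_G((\theta C)^{(\perp_G)_M})$. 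By $G$-self-duality, $(\theta C)^{(\perp_G)_M}=\theta C$, and from the generalized Hayden theorem we get $\abs{\theta C}^2=\abs{\theta(V^n)}$. These two facts make the normalisation exactly cancel.

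For general $e$, I would reduce to this case by applying Theorem \ref{fweMac} over the ring $eRe$ to the projection of $C$. The key is a $G$-analogue of the NRS lemma that $C=(Ce)\oplus(C(1-e))$ when $e$ is symmetric. Concretely, I need to show two things. First, self-duality of $C$ implies that the slices $C_w=\{x\in eV^n : x+w\in C\}$, for $w\in(1-e)V^n$, are cosets of a code that is $G$-self-dual in $eV^n$ with respect to the restricted form. Second, $\theta$ is compatible with this decomposition. The delicate point is that $\theta$ need not commute with sums. So I would try first to show that the $G$-orbit sets factor as $\theta(V^n)\cong\theta(eV^n)\times\theta((1-e)V^n)$ on the relevant vectors, using that $G$ permutes coordinates while $e$ acts within each coordinate. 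If the factorization holds only partially, I would instead carry out the Poisson summation directly on $\theta C$, using the orthogonality of characters of the subgroup $C\cap eV^n$. The Fourier transform of the coset indicator then produces the slice of the dual, which equals $C$ by self-duality.
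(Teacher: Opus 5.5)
Your proposal follows the paper's route: $m_r$ and $d_\phi$ are handled exactly as in Theorem~\ref{parainv}, and the case $e=1$ is Theorem~\ref{fweMac} plus $G$-self-duality and $\abs{\theta C}^2=\abs{\theta(V^n)}$. The paper uses that cardinality silently; it comes from the count $\abs{D}\,\abs{D^{\perp}}=\abs{\theta(V^n)}$ for a nondegenerate form, not from Hayden's theorem alone. A general symmetric idempotent is then reduced to $e=1$ via Lemma~\ref{iotaC}, which is exactly the $G$-analogue of the NRS lemma you invoke, and which the paper also only states. That lemma must refer to the twisted pairing $(w,x)\mapsto\beta(w,\nu_e x)$ on $e\,\theta(V^n)$ coming from $h_{e,\mu_e,\nu_e}$, not the plain restriction of $\beta$, which may be degenerate.

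Two slips need correcting. First, $\abs{\theta C}^{-1}\sum_{v}\sum_{u\in\theta C}\exp(2\pi\sqrt{-1}\beta^n_G(v,u))e_v$ is $\fwe_G((\theta C)^{(\perp_G)_M})$ itself, and the factor produced by $h_{1,1,1}$ is $\abs{\theta C}\,\abs{\theta(V^n)}^{-1/2}$. Second, $\theta\in R[G]$ is additive and commutes with $e$, so $\theta(V^n)=e\,\theta(V^n)\oplus(1-e)\,\theta(V^n)$ holds outright and your ``delicate point'' never arises.
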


This paper is organized as follows. In Section \ref{bilin}, we provide an analogue of Hayden's theorem \cite{BHH81} for general bilinear forms. In Section \ref{G-code}, we generalize the definitions of $G$-codes and $G$-dual codes, and establish the MacWilliams identities for $G$-codes with respect to $G$-bilinear forms. In Section \ref{G-rep}, we present a generalization of representations of form rings and the Clifford--Weil groups \cite{NRS06} to the context of $G$-codes. Furthrmore, by applying the results from Section \ref{G-code}, we show that the $G$-full weight enumerators of $G$-self-dual and $G$-isotropic codes are invariant under the Clifford--Weil groups.

\section{Hayden's Theorem for bilinear forms over commutative rings} \label{bilin}

In this section, we define bilinear forms over commutative rings, and establish an analogue of \cite[Theorem 4.2]{BHH81} for these forms. 
We apply Theorem \ref{hay} to prove the MacWilliams identity 
in Theorem \ref{hweMac}. 
Our definition of bilinear forms over commutative rings follows the framework in \cite[Sections 1 and 3]{NRS06}.

\subsection{Bilinear forms over commutative rings}

We define bilinear forms over commutative rings. 

\begin{Def}

Let $k$ be a non-trivial unital commutative ring. 
We call $R$ a $k$-algebra if $R$ is a non-trivial unital ring, and there exists a ring homomorphism from $k$ to $R$ such that its image is containd in the center of $R$. 

\end{Def}

Throughout this paper, let $k$ be a non-trivial unital commutative ring, $R$ a $k$-algebra, 
$V$ a left $R$-module, $A$ a $k$-module, $n$ a positive integer, and $V^n$ the direct sum of $n$ copies of $V$. 

\begin{Def} \label{bil}

A $k$-module homomorphism 
\[
 \beta \colon V \otimes_k V \to A
\]
is called an $A$-valued $k$-bilinear form on $V$. 

\end{Def}

We denote the set of all $A$-valued $k$-bilinear forms on $V$ $\Bil_k (V, A)$. 

\begin{rem*}

Since $V$ is a left $R$-module, $\Bil_k (V, A)$ becomes a right $(R \otimes_k R)$-module by defining 
\[
 \beta (r \otimes_k s) (x \otimes_k y) = \beta (rx \otimes_k sy) 
\]
for $\beta \in \Bil(V, A)$, $x, y \in V$, and $r, s \in R$. 

\end{rem*}

We define the $A$-valued $k$-bilinear form on $V^n$ as the orthogonal sum of 
$n$ copies of an $A$-valued $k$-bilinear form on $V$. 

\begin{Def}

Let $\beta$ be an $A$-valued $k$-bilinear form on $V$. We define the $k$-module homomorphism $\beta^n \colon V^n \otimes_k V^n \to A$ by 
\[
 \beta^n (u \otimes_k v) = \sum^n_{i = 1} \beta (u_i \otimes_k v_i) 
\]
for $u = (u_1, \dots, u_n), \rv{v}{n} \in V^n$. 

\end{Def}

The pair $(V^n, \beta^n)$ is called the orthogonal sum of $n$ copies of $(V, \beta)$. 

\begin{ex}

Let $p$ be a prime number, $k$ the set of integers $\Z$, $\Q$ the set of rational numbers, and $R$ and $V$ the prime field $\F_p$ of order $p$. 
Given $A = \Q/\Z$, we define $\beta \colon \F_p \otimes_{\Z} \F_p \to \QZ$ to be 
\[
 \beta (x \otimes_{\Z} y) = \frac{1}{p} xy. 
\]
Then $\beta^n$ corresponds to the Euclidean inner product 
\[
 (u, v) = \sum^n_{i = 1} u_i v_i 
\] 
for $u = (u_1, \dots, u_n), \rv{v}{n} \in \F_p^n$. 
\end{ex}

In light of this example, we can regard Theorem \ref{hay} as an analogue of \cite[Theorem 4.2]{BHH81}. 

\begin{Def}

Let $C$ be an $R$-submodule of $V^n$ and $M \subset \Bil_k (V, A)$. We call 
\[
 C^{\perp_M} = \{ v \in V^n \mid \beta^n (v \otimes_k u) = 0, \quad \text{for all $\beta \in M$ and all $u \in C$} \} 
\]
the dual of $C$ (with respect to $M$). 

If $C \subset C^{\perp_M}$, we say that $C$ is self-orthogonal (with respect to $M$). 

If $C = C^{\perp_M}$, we say that $C$ is self-dual (with respect to $M$). 

\end{Def}

\subsection{Modules with group actions}

We define the group action on codes. 
Throughout this paper, we fix the action of a group $G$ on $V^n$ by the following definition \ref{grp}.

\begin{Def} \label{grp}

Let $\rv{u}{n} \in V^n$ and $g \in \GL(k^n)$. 
We define $v = g \cdot u \in V^n$ to be the linear transformation induced by $g^{-1}$, i.e., 
\[
 v = (v_1, \dots, v_n) \ceq (u_1, \dots, u_n) g^{-1}. 
\]
More precisely, if $g^{-1} = (g_{ij})^n_{i,j = 1}$, then each component $v_i$ is defined by 
\[
 v_i = u_1 g_{1i} + \cdots + u_n g_{ni}. 
\]

\end{Def}

This transformation defines a left action on $V^n$ of $\GL(k^n)$. 

\begin{rem*}

Let $G$ be a finite subgroup of $\GL(k^n)$ and $R[G]$ the group ring of $G$ over $R$. 
By defining 
\[
 \theta u = \sum_g \theta_g g \cdot u 
\]
for any $\theta = \sum_g \theta_g g \in R[G]$ and $u \in V^n$, 
we can regard $V^n$ as a left $R[G]$-module. 

\end{rem*}

In this paper, we define $G$ to be a finite subgroup of $\GL(k^n)$ and $R[G]$ to be the group ring of $G$ over $R$. If $C$ is an $R$-submodule of $V^n$ and $G$ acts on $C$ defined in \ref{grp}, we call $C$ an $R[G]$-submodule of $V^n$. 

For an $R[G]$-submodule $C \subset V^n$ and $\theta \in R[G]$, 
we define 
\[
 \theta C = \theta (C) \ceq \{ \theta u \in C \mid u \in C \}. 
\]

\subsection{Hayden's Theorems for bilinear forms}

We provide an analogue of \cite[Theorem 4.2]{BHH81} for bilinear forms over commutative rings. In what follows, for the sake of simplicity, we write $\otimes_k$ and $\Bil_k (V, A)$ simply as $\otimes$ and $\Bil (V, A)$, respectively. 

\begin{lem}

Let $\beta \in \Bil (V, A)$. %be an $A$-valued $k$-bilinear form on $V$. 
For any $g \in \GL (k^n)$, we write $g^T \in \GL (k^n)$ for its transpose. 
Then 
\[
 \beta^n (u \otimes (g \cdot v)) = \beta^n ((g^T \cdot u) \otimes v) 
\]
for all $u, v \in V^n$. 

\end{lem}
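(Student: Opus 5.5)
The proof is a direct computation in coordinates. Both sides expand into the same double sum over the entries of $g^{-1}$, using only the $k$-bilinearity of $\beta$. Throughout, $V$ is a $k$-module via the structure map $k \to R$ followed by the left $R$-action, and the coefficients $g_{ij}$ act on $V$ through this map.

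First, fix notation. Write $h = g^{-1} = (h_{ij})_{i,j=1}^n \in \GL(k^n)$. By Definition \ref{grp}, the $i$-th component of $g \cdot v$ is $\sum_{j} h_{ji} v_j$. Since $k$ is commutative, transposition reverses products, so $(g^T)^{-1} = (g^{-1})^T = h^T$. Hence the $j$-th component of $g^T \cdot u$ is $\sum_i u_i (h^T)_{ij} = \sum_i h_{ji} u_i$.

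Next, expand both sides. By the definition of $\beta^n$ and the fact that $\beta$ is a $k$-module homomorphism on $V \otimes_k V$, where $a x \otimes y = x \otimes a y = a (x \otimes y)$ for $a \in k$, we get
\[
 \beta^n (u \otimes (g \cdot v)) = \sum_{i=1}^n \beta\Bigl(u_i \otimes \sum_{j} h_{ji} v_j\Bigr) = \sum_{i,j} h_{ji}\, \beta(u_i \otimes v_j),
\]
and
\[
 \beta^n ((g^T \cdot u) \otimes v) = \sum_{j=1}^n \beta\Bigl(\sum_i h_{ji} u_i \otimes v_j\Bigr) = \sum_{i,j} h_{ji}\, \beta(u_i \otimes v_j).
\]
The two expressions coincide, which proves the lemma.

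There is no real obstacle. The only points needing care are the index bookkeeping caused by the action being defined through $g^{-1}$ acting on the right, and the identity $(g^T)^{-1} = (g^{-1})^T$, which requires $k$ to be commutative. The scalars $h_{ji}$ can be moved across the tensor because the image of $k$ in $R$ is central and $\beta$ is $k$-linear.
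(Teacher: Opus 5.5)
Your proposal is correct and takes essentially the same approach as the paper: both sides are expanded in coordinates of $g^{-1}$ using the $k$-bilinearity of $\beta$, and the identity $(g^T)^{-1} = (g^{-1})^T$ matches them up. The only cosmetic difference is that you pull $h_{ji}$ out of $\beta$ into $A$ and compare the two double sums, whereas the paper moves $g_{ji}$ across the tensor sign inside $\beta$ in a single chain of equalities.
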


\begin{proof}

Let $u = (u_1, \dots, u_n), \rv{v}{n} \in V^n$, and $g^{-1} = (g_{ij})^n_{i, j = 1} \in \GL (k^n)$. Then it holds that  
\begin{align*}
  \beta^n (u \otimes (g \cdot v)) 
& = \sum^n_{i = 1} \beta \left( u_i \otimes \left( \sum^n_{j = 1} v_j g_{ji} \right) \right) \\
& = \sum^n_{i = 1} \sum^n_{j = 1} \beta (u_i \otimes v_j g_{ji}) \\
%& = \sum^n_{j = 1} \sum^n_{i = 1} \beta (u_i \otimes v_j g_{ji}) \\
& = \sum^n_{j = 1} \sum^n_{i = 1} \beta (u_i \otimes g_{ji} v_j) \\
& = \sum^n_{j = 1} \sum^n_{i = 1} \beta (u_i g_{ji} \otimes v_j) \\
& = \sum^n_{j = 1} \beta \left( \left( \sum^n_{i = 1} u_i g_{ji} \right) \otimes v_j \right) \\
& = \beta^n (u (g^{-1})^T \otimes v) \\
& = \beta^n (u (g^T)^{-1} \otimes v) \\
& = \beta^n ((g^T \cdot u) \otimes v). 
\end{align*}
\end{proof}

\begin{thm} \label{hay}

Let $M \subset \Bil (V, A)$ and let $C$ be an $R[G]$-submodule of $V^n$. 
If $\theta \in R[G]$ is an idempotent, i.e., $\theta^2 = \theta$, then 
\[
 (\theta C)^{\perp_M} = \ker \theta^T \oplus \theta^T (C^{\perp_M}). 
\]

\end{thm}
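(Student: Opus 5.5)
The plan is to reduce the statement to two facts: an adjointness relation between $\theta$ and $\theta^T$ with respect to every $\beta^n$, and the elementary decomposition $V^n = \ker e \oplus e(V^n)$ for any idempotent $e$.

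Here, for $\theta = \sum_g \theta_g g \in R[G]$, I take $\theta^T \ceq \sum_g \theta_g g^T$. This acts on $V^n$ via the action of Definition \ref{grp}, so $\ker\theta^T$ and $\theta^T(C^{\perp_M})$ make sense as subsets of $V^n$.

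\textbf{Step 1 (adjointness).} Extend the preceding lemma linearly from $g$ to $\theta$:
\[
 \beta^n (u \otimes \theta v) = \beta^n (\theta^T u \otimes v) \qquad \text{for all } u, v \in V^n,\ \beta \in M.
\]
For the $g$-part this is exactly the lemma. For the coefficients one needs $\beta(x \otimes \theta_g y) = \beta(\theta_g x \otimes y)$. This is automatic when $\theta_g$ lies in the image of $k$, by $k$-bilinearity, just as in the last lines of the lemma's proof. In general it is the compatibility of $M$ with the $R$-action that the setting must supply. I expect this coefficient issue to be the main obstacle. I would first check it against the paper's intended convention (via the right $(R\otimes_k R)$-module structure on $\Bil(V,A)$), and otherwise restrict to coefficients for which it holds.

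\textbf{Step 2 ($\theta^T$ is idempotent).} Transposition reverses products, so $(\theta^2)^T = (\theta^T)^2$, and therefore $(\theta^T)^2 = \theta^T$. Consequently $\ker\theta^T \cap \theta^T(V^n) = 0$: if $x = \theta^T y$ and $\theta^T x = 0$, then $x = \theta^T\theta^T y = \theta^T x = 0$. So once the two inclusions below are established, the sum on the right-hand side is automatically direct.

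\textbf{Step 3 (the two inclusions).} By Step 1, $v \in (\theta C)^{\perp_M}$ if and only if $\beta^n(\theta^T v \otimes u) = 0$ for all $u\in C$ and $\beta\in M$. By the definition of the dual this says $\theta^T v \in C^{\perp_M}$.

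For "$\subseteq$": given $v \in (\theta C)^{\perp_M}$, write
\[
 v = (v - \theta^T v) + \theta^T v .
\]
The first summand lies in $\ker\theta^T$ by Step 2. For the second, $\theta^T v = \theta^T(\theta^T v)$ with $\theta^T v \in C^{\perp_M}$, so it lies in $\theta^T(C^{\perp_M})$.

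For "$\supseteq$": if $\theta^T x = 0$, then $\beta^n(x \otimes \theta u) = \beta^n(\theta^T x \otimes u) = 0$, so $\ker\theta^T \subseteq (\theta C)^{\perp_M}$. If $w \in C^{\perp_M}$, then
\[
 \beta^n(\theta^T w \otimes \theta u) = \beta^n((\theta^T)^2 w \otimes u) = \beta^n(\theta^T w \otimes u) = \beta^n(w \otimes \theta u) = 0 .
\]
The last equality holds because $\theta u \in C$, as $C$ is an $R[G]$-submodule. Hence $\theta^T(C^{\perp_M}) \subseteq (\theta C)^{\perp_M}$.

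Combining the two inclusions with the directness from Step 2 gives
\[
 (\theta C)^{\perp_M} = \ker \theta^T \oplus \theta^T (C^{\perp_M}).
\]
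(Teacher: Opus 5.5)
Your proposal is correct and follows the paper's proof essentially step for step: adjointness of $\theta$ and $\theta^T$ under $\beta^n$, the splitting $v = (v - \theta^T v) + \theta^T v$ with $\theta^T v \in C^{\perp_M}$ for $v \in (\theta C)^{\perp_M}$, the reverse inclusion by direct computation, and directness from $(\theta^T)^2 = \theta^T$. The coefficient caveat you flag in Step 1 is an assumption the paper's proof also makes silently, as is the analogous need in Step 2 for the coefficients of $\theta$ to commute so that $(\theta^2)^T = (\theta^T)^2$; both hold for $\theta = \frac{1}{\abs{G}} \sum_{g \in G} g$ as used in Corollary \ref{Snhay}, since $\abs{G}^{-1}$ passes across $\beta$ by additivity.
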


\begin{proof}

We prove $(\theta C)^{\perp_M} = \ker \theta^T + \theta^T (C^{\perp_M})$ 
and $\ker \theta^T \cap \theta^T (C^{\perp_M}) = \{ 0 \}$. 

Letting $u \in C$, $v \in C^{\perp_M}$, $w \in \ker \theta^T$ and $\beta \in M$, we have 
\begin{align*}
 \beta^n (\theta^T v \otimes \theta u) 
& = \beta^n (v \otimes \theta^2 u) = \beta^n (v \otimes \theta u) = 0, \\
 \beta^n (w \otimes \theta u) 
& = \beta^n (\theta^T w \otimes u) = \beta^n (0 \otimes u) = 0. 
\end{align*}
Hence this means that $(\theta C)^{\perp_M} \supset \ker \theta^T + \theta^T (C^{\perp_M})$. 

Conversely, let $v \in (\theta C)^{\perp_M}$. For any $u \in C$ and $\beta \in M$, we obtain 
\[
 0 = \beta^n (v \otimes \theta u) = \beta^n (\theta^T v \otimes u), 
\]
which implies $\theta^T v \in C^{\perp_M}$. Consequently, we can decompose $v$ as
\[
 v = v - \theta^T v + \theta^T v = (v - \theta^T v) + \theta^T (\theta^T v) \in \ker \theta^T + \theta^T (C^{\perp_M}). 
\]
Thus this yields $(\theta C)^{\perp_M} \subset \ker \theta^T + \theta^T (C^{\perp_M})$. 

Suppose $v \in \ker \theta^T \cap \theta^T (C^{\perp_M})$. Then there exists $u \in C^{\perp_M}$ such that $v = \theta^T u$. It follows that 
\[
 0 = \theta^T v = \theta^T (\theta^T u) = \theta^T u = v. 
\]
Therefore, we see that $\ker \theta^T \cap \theta^T (C^{\perp_M}) = \{ 0 \}$.  

\end{proof}

\begin{cor} \label{Snhay}

Let $M \subset \Bil (V, A)$ and let $C$ be an $R[G]$-submodule of $V^n$. Define 
\[
 G^T = \{ g^T \in \GL(k^n) \mid g \in G \} 
\]
and 
\[
\theta = \frac{1}{\abs{G}} \sum_{g \in G} g. 
\]
Suppose that $G^T = G$ and the order $\abs{G}$ of $G$ is a unit in $R$. Then 
\[
 (\theta C)^{\perp_M} = \ker \theta \oplus \theta (C^{\perp_M}). 
\] 

\end{cor}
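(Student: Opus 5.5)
The plan is to apply Theorem \ref{hay} to the averaging element $\theta = \frac{1}{\abs{G}} \sum_{g \in G} g$, so only two things need checking: that $\theta$ is an idempotent of $R[G]$, and that $\theta^T = \theta$. Both use the hypotheses directly. Because $\abs{G}$ is a unit in $R$, the coefficient $\frac{1}{\abs{G}}$ is a legitimate element of $R$. Hence $\theta \in R[G]$, and it acts on $V^n$ as in the remark following Definition \ref{grp}.

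First, idempotency. For each $h \in G$, left multiplication $g \mapsto hg$ is a bijection of $G$, so $h\theta = \theta$. Summing over $h$ gives
\[
 \theta^2 = \frac{1}{\abs{G}} \sum_{h \in G} h \theta = \frac{1}{\abs{G}} \abs{G} \, \theta = \theta .
\]

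Second, symmetry. Here $\theta^T$ means $\frac{1}{\abs{G}} \sum_{g \in G} g^T$, which is how the proof of Theorem \ref{hay} uses it, via the preceding lemma extended $R$-linearly. This linearity holds because the coefficients $\theta_g \in R$ can be moved across $\beta^n$, exactly as the scalars $g_{ji}$ were moved in the lemma's proof. Transposition $g \mapsto g^T$ is injective on $\GL(k^n)$, and $G^T = G$ by hypothesis, so it restricts to a bijection of $G$. Therefore $\sum_{g \in G} g^T = \sum_{g \in G} g$, and $\theta^T = \theta$.

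Substituting $\theta^T = \theta$ into Theorem \ref{hay} then gives $(\theta C)^{\perp_M} = \ker \theta \oplus \theta (C^{\perp_M})$. The one point that needs care is the meaning of $\theta^T$ for an element of $R[G]$ rather than of $\GL(k^n)$, together with the extension of the transpose lemma to $R$-linear combinations. The rest is immediate.
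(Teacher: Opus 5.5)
Your proof is correct and follows the paper's argument: you show $\theta^2=\theta$, use $G^T=G$ to get $\theta^T=\theta$, and then apply Theorem \ref{hay}. One small correction to your side remark: a general coefficient $r\in R$ cannot be moved across a $k$-bilinear $\beta$ the way the entries $g_{ji}\in k$ can, since the tensor product is only over $k$; here, however, the only coefficient is $\abs{G}^{-1}$, and it does move by bi-additivity, because $\beta(\abs{G}^{-1}x\otimes y)=\beta(\abs{G}^{-1}x\otimes \abs{G}\,\abs{G}^{-1}y)=\beta(x\otimes \abs{G}^{-1}y)$.
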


\begin{proof}
From 
\[
 \theta^2 = \left( \frac{1}{\abs{G}} \sum_{g \in G} g \right)^2 = \frac{1}{\abs{G}^2} \sum_{g, h \in G} g h 
 = \frac{1}{\abs{G}} \sum_{g \in G} g = \theta, 
\]
we observe that $\theta$ is an idempotent and satisfies the hypothesis of Theorem \ref{hay}. In addition, the assumption $G^T = G$ implies that 
\[
  \theta^T = \frac{1}{\abs{G}} \sum_{g \in G} g^T 
= \frac{1}{\abs{G}} \sum_{g \in G} g = \theta. 
\]
Thus applying Theorem \ref{hay}, we see that $(\theta C)^{\perp_M} = \ker \theta \oplus \theta (C^{\perp_M})$. 

\end{proof}

\section{$G$-codes and MacWilliams identities} \label{G-code}

In this section, we present a generalization of the definition of codes with a group action \cite{CM23}, and define dual codes as the orthogonal complements with respect to bilinear forms \cite{NRS06}. In particular, we follow the approach in \cite{CM23} to define $G$-codes, $G$-bilinear forms, and the $G$-Hamming weight. 

We identify the symmetric group with its permutation representation, and define $G$ as a subgroup of the symmetric group of degree $n$. 
(Note in advance that the term "representation" doesn't mean linear representation in Section \ref{G-rep}. ) 

We set $k = \Z$ and simply refer to $\Z$-bilinear forms as bilinear form. 
Furthermore, we define $[n] \ceq \{1, \dots, n \}$ and $ \theta = \frac{1}{\abs{G}} \sum_{g \in G} g$. 
We assume that the order $\abs{G}$ of $G$ is a unit in $R$. 

\subsection{$G$-codes}

We define $G$-codes and relevant concepts. 

\begin{Def}
 
Let $G \alpha_1, \dots, G \alpha_t$ be the orbits of the coordinates of $V^n$ 
under the action of $G$, where $t$ denotes the number of these orbits, and $\{ \alpha_1, \dots, \alpha_t \} \subset [n]$ is a complete system of representatives for these orbits. That is, we define 
\[
 G \alpha_i = \{ g^{-1} (\alpha_i) \mid g \in G \} 
\]
for each $i \in [t]$, such that 
\[
 G \alpha_i \cap G \alpha_j = \emptyset 
\]
whenever $i \neq j$. 

The size $\abs{G \alpha_i}$ of each orbit $G \alpha_i$ is denoted by $m_{\alpha_i}$. 

\end{Def}

For $g \in G$ and $\rv{v}{n} \in V^n$, we define the action of $g$ on $v$ by 
\[
 g \cdot v \ceq v g^{-1} = ( v_{g^{-1} (1)}, \dots, v_{g^{-1} (n)}). 
\]

\begin{Def}

For each $i \in [t]$, we define
$\overline{G \alpha_i} = (y_1, \dots, y_n) \in V^n$ as follows: 
\begin{equation*}
y_j = 
\begin{cases}
 1 & j \in G \alpha_i, \\
 0 & j \notin G \alpha_i. 
\end{cases}
\end{equation*}

\end{Def}

\begin{Def}

Any element $\rv{u}{n} \in \theta (V^n)$ can be uniquely expressed as 
\[
 \rv{u}{n} = \sum^t_{i = 1} u_{\alpha_i} \overline{G \alpha_i}. 
\]
for some $u_{\alpha_i} \in V$. 

We then define the corresponding vector $u_G \in V^t$ by 
$u_G = (u_{\alpha_1}, \dots, u_{\alpha_t})$. 

\end{Def}

Note that the map 
\[
( - )_G \colon \theta (V^n) \to V^t; \ \rv{u}{n} \mapsto u_G = (u_{\alpha_1}, \dots, u_{\alpha_t}) 
\]
is an $R$-module isomorphism. 

By using these definitions, we now define $G$-bilinear forms and the $G$-Hamming weight. 

\begin{Def}

Let $A = \Q/\Z$ and $\beta \in \Bil (V, \QZ)$. 

We define the $\Z$-module homomorphism $\beta^n_G \colon \theta (V^n) \otimes \theta (V^n) \to \Q/\Z$ by 
\[
 \beta^n_G (u \otimes v) \ceq \beta^t (u_G \otimes v_G) = \sum^t_{i = 1} \beta (u_{\alpha_i} \otimes v_{\alpha_i}) 
\]
for any $\rv{u}{n}$, $\rv{v}{n} \in \theta (V^n)$. 

\end{Def}

We call this homomorphism the $\Q/\Z$-valued $G$-bilinear form on $\theta (V^n)$ 
and write the set of such forms as $\Bil (\theta (V^n), \Q/\Z)_G$. Note that if $G = \{ e \}$, then $\beta^n_{\{ e \}} = \beta^n$ and $\Bil (\theta (V^n), \Q/\Z)_{\{ e \}} \subset \Bil (V^n, \Q/\Z)$. 

\begin{Def}

We define the map $\wt_G \colon \theta (V^n) \to \{0, 1, \dots, t \}$ by 
\[
 \wt_G (u) = \abs{ \{ i \in [t] \mid u_{\alpha_i} \neq 0 \} } 
\]
for all $\rv{u}{n} \in \theta (V^n)$. 

We call $\wt_G (u)$ the $G$-Hamming weight of $u$. 

\end{Def}

It is clear that if $G = \{ e \}$, then $\wt_{\{ e \}}$ coincides with the Hamming weight $\wt$. 

In what follows, we assume that $R$ and $V$ are finite sets, and we fix $A = \Q/\Z$. 

\begin{Def}

An $R[G]$-submodule $C$ of $V^n$ is called a $G$-code. 

Let $M \subset \Bil (V, \Q/\Z)$ and let $C$ be a $G$-code. 
We define the $G$-dual code of $C$ (with respect to $M$) by 
\[
 (\theta C)^{(\perp_G)_M} = \{ v \in \theta (V^n) \mid \beta^n_G (v \otimes u) = 0 \quad 
\text{for all $\beta \in M$ and all $u \in \theta C$} \}. 
\]

Furthermore, we say that $C$ is $G$-self orthogonal (with respect to $M$) 
if $\theta C \subset (\theta C)^{(\perp_G)_M}$, and $G$-self dual (with respect to $M$) if $\theta C = (\theta C)^{(\perp_G)_M}$. 

\end{Def}

%We refer to $R[G]$-submodules of finite modules $V^n$ on finite rings $R$ as $G$-codes. 
Note that if $G = \{ e \}$, then $(\theta C)^{(\perp_{\{ e \}})_M}$ coincides with the dual code $C^{\perp_M}$. 

%We describe algebraic properties like the MacWilliams identities for a $G$-code in Section \ref{MWi}. 

\subsection{$G$-weight enumerators}

We define the $G$-weight enumerators of $G$-codes. Moreover, we drive the MacWilliams identities, which relate the $G$-weight enumerators of $G$-codes to that of $G$-dual codes. 
%We also show the lemma which we use to prove MacWilliams identity \ref{hweMac}. 
We follow the approach in \cite{NRS06} for the definitions of these weight enumerators. 

Let $\C$ be complex numbers. 

\begin{Def} \label{we}
 
Let $C$ be a $G$-code. The $G$-Hamming weight enumerator of $\theta C$ is defined by
\[
 \hwe_G (\theta C) (x, y) = \sum_{u \in \theta C} x^{t - \wt_{G} (u)} y^{\wt_{G} (u)} \ \in \C[x,y]. 
\]

The $G$-full weight enumerator of $\theta C$ is defined by 
\[
 \fwe_G (\theta C) = \sum_{u \in \theta C} e_u \ \in \C[V^t]. 
\] 

Furthermore, we define $\cwe_G (\theta C) (x_v : v \in V) \in \C[x_v \mid v \in V]$ as the image of $\fwe_G (\theta C)$ under the projection 
\[
 \C[V^t] \to \C[x_v \mid v \in V]; \ e_v = e_{(v_{\alpha_1}, \dots, v_{\alpha_t})} \mapsto \prod^t_{i = 1} x_{v_{\alpha_i}}. 
\] 

We call $\cwe_G (\theta C) (x_v : v \in V)$ the $G$-complete weight enumerator of $\theta C$. 

\end{Def}

Note that if $G = \{ e \}$, we define $\hwe_{\{ e \}} = \hwe$, $\fwe_{\{ e \}} = \fwe$, and $\cwe_{\{ e \}} = \cwe$, respectively. 

By substituting $x_0 = x$ and $x_a = y$ for all $0 \neq a \in V$ into $\cwe_G (\theta C) (x_v : v \in V)$, we obtain the $G$-Hamming weight enumerator: 
\[
 \cwe_G (\theta C) (x_0, y : 0 \neq a \in V) = \hwe_G (\theta C) (x, y). 
\]

We refer to \cite{CM23} for the following definitions. 

\begin{Def}
 
The $G$-orbit length matrix $M_G$ is the $n \times n$ diagonal matrix $M_G = \mathrm{diag} (l_1, \dots, l_n)$, where the $j$-th diagonal entry is given by 
\[
 l_j = m_{\alpha_i} \quad 
(\text{for $j \in G \alpha_i$, $i \in [t]$, and $\alpha_i \in [n]$}). 
\]

\end{Def}

\begin{lem} \label{dual}

Let $M \subset \Bil (V, \Q/\Z)$. 
If $C$ is a $G$-code, then 
\[
 (\theta C)^{(\perp_G)_M} = \theta (C^{\perp_M}) M_G. 
\]

\end{lem}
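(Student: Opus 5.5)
The plan is to reduce the $G$-bilinear form $\beta^n_G$ to the ordinary form $\beta^n$ through the orbit length matrix $M_G$, and then apply Corollary \ref{Snhay}.

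First I check that the hypotheses of Corollary \ref{Snhay} hold. $G$ is a group of permutation matrices, so $g^T = g^{-1} \in G$. Hence $G^T = G$ and $\theta^T = \theta$, and $\abs{G}$ is a unit in $R$ by the standing assumption.

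\textbf{Key identity.} For $x, v \in \theta(V^n)$ I claim
\[
 \beta^n(x \otimes v) = \beta^n_G((x M_G) \otimes v).
\]
Elements of $\theta(V^n)$ are constant on each orbit $G\alpha_i$, so the left side equals $\sum_{i=1}^t m_{\alpha_i}\beta(x_{\alpha_i}\otimes v_{\alpha_i})$. Right multiplication by the diagonal matrix $M_G$ scales every coordinate in $G\alpha_i$ by $m_{\alpha_i}$. So $xM_G$ again lies in $\theta(V^n)$ and satisfies $(xM_G)_{\alpha_i} = m_{\alpha_i} x_{\alpha_i}$, which gives the right side.

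Moreover, each $m_{\alpha_i}$ divides $\abs{G}$, so it is a unit in $R$. Therefore $x \mapsto xM_G$ is a bijection of $\theta(V^n)$ onto itself, with inverse $x\mapsto xM_G^{-1}$, where $M_G^{-1}$ is the diagonal matrix of the inverses $m_{\alpha_i}^{-1}$ in $R$.

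\textbf{Inclusion $\supseteq$.} Let $c' \in C^{\perp_M}$, $u \in C$ and $\beta\in M$. By the key identity and the lemma on transposes,
\[
 \beta^n_G(\theta(c')M_G \otimes \theta u) = \beta^n(\theta c' \otimes \theta u) = \beta^n(c' \otimes \theta^2 u) = \beta^n(c'\otimes \theta u).
\]
Since $C$ is an $R[G]$-submodule, $\theta u \in C$, so this value is $0$.

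\textbf{Inclusion $\subseteq$.} Let $v \in (\theta C)^{(\perp_G)_M}$ and put $x = vM_G^{-1} \in \theta(V^n)$. For all $u\in C$ and $\beta\in M$ we get
\[
 \beta^n(x\otimes\theta u) = \beta^n_G(v\otimes\theta u) = 0,
\]
so $x \in (\theta C)^{\perp_M}$. By Corollary \ref{Snhay}, $x = w + \theta c'$ with $w \in\ker\theta$ and $c' \in C^{\perp_M}$. Since $x\in\theta(V^n)$ and $\theta$ is idempotent, $x = \theta x = \theta w + \theta^2 c' = \theta c'$. Hence $v = \theta(c') M_G \in \theta(C^{\perp_M})M_G$.

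\textbf{Main obstacle.} The only delicate points are bookkeeping. One must make sure that right multiplication by $M_G$ (and by $M_G^{-1}$) preserves $\theta(V^n)$; this holds because $M_G$ is constant on orbits. One must also keep the argument order in $\beta^n$ and $\beta^n_G$ consistent with the definition of the $G$-dual, which puts the dual vector in the first slot; the argument above respects this. Invertibility of the $m_{\alpha_i}$ comes from the assumption that $\abs{G}$ is a unit in $R$.
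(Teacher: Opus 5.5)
Your proof is correct and follows the paper's argument: both inclusions rest on the identity $\beta^n(x\otimes v)=\beta^n_G((xM_G)\otimes v)$ for $x,v\in\theta(V^n)$, on the invertibility of the $m_{\alpha_i}$, and on Corollary~\ref{Snhay}. Your version is slightly tighter than the paper's in two places: you justify that each $m_{\alpha_i}$ is a unit from $m_{\alpha_i}\mid\abs{G}$, and you conclude $x=\theta x=\theta w+\theta^2c'=\theta c'$ explicitly, whereas the paper only observes $\theta u'=u'$.
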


\begin{proof}

By Corollary \ref{Snhay}, we have $\theta (C^{\perp_M}) \subset (\theta C)^{\perp_M}$. 
Thus for $u = \sum^t_{i = 1} u_{\alpha_i} \overline{G \alpha_i} \in \theta C$, $\beta \in M$ and $v = \sum^t_{i = 1} v_{\alpha_i} \overline{G \alpha_i} \in \theta (C^{\perp_M})$, it follows that 
\begin{align*}
0 = \beta^n (v \otimes u) 
& = \sum^n_{i = 1} \beta (v_i \otimes u_i) \\
& = \sum^t_{i = 1} m_{\alpha_i} \beta (v_{\alpha_i} \otimes u_{\alpha_i}) \\
& = \sum^t_{i = 1} \beta (m_{\alpha_i} v_{\alpha_i} \otimes u_{\alpha_i}) \\
& = \sum^t_{i = 1} \beta (v'_{\alpha_i} \otimes u_{\alpha_i}) \\
& = \beta^n_G (v' \otimes u), 
\end{align*}
which 
\[
 v' \ceq \sum^t_{i = 1} v'_{\alpha_i} \overline{G \alpha_i} \ceq 
\sum^t_{i = 1} m_{\alpha_i} v_{\alpha_i} \overline{G \alpha_i} = v M_G. 
\]
Hence this yields $(\theta C)^{(\perp_G)_M} \supset \theta (C^{\perp_M}) M_G$. 

Conversely, let $u = \sum^t_{i = 1} u_{\alpha_i} \overline{G \alpha_i} \in (\theta C)^{(\perp_G)_M}$ and $v = \sum^t_{i = 1} v_{\alpha_i} \overline{G \alpha_i} \in \theta C$. 
Since $\abs{G}$ in a unit $\in R$ and $\abs{G \alpha_i} = m_{\alpha_i}$, We can assume $m_{\alpha_i}$ is a unit $\in R$. Therefore we can define 
\[
 u' \ceq \sum^t_{i = 1} \frac{u_{\alpha_i}}{m_{\alpha_i}} \overline{G \alpha_i} \in \theta (V^n), 
\]
and we see that 
\begin{align*}
\beta^n (u' \otimes v) 
& = \sum^n_{i = 1} \beta (u'_i \otimes v_i) \\
& = \sum^t_{i = 1} m_{\alpha_i} \beta (u'_{\alpha_i} \otimes v_{\alpha_i}) \\
& = \sum^t_{i = 1} m_{\alpha_i} \beta (\frac{u_{\alpha_i}}{m_{\alpha_i}} \otimes v_{\alpha_i}) \\
& = \sum^t_{i = 1} \beta (u_{\alpha_i} \otimes v_{\alpha_i}) \\
& = \beta^n_G (u \otimes v) = 0. 
\end{align*}
Thus, applying Corollary \ref{Snhay}, it follows that $u' \in (\theta C)^{\perp_M} = \ker \theta \oplus \theta (C^{\perp_M})$. 
In addition we observe that $u' \in \ker \theta \ \Leftrightarrow \ u' = 0$ because 
\begin{align*}
 \theta u'
& = \theta \left( \sum^t_{i = 1} \frac{u_{\alpha_i}}{m_{\alpha_i}} \overline{G \alpha_i} \right) \\
& = \left( \frac{1}{\abs{G}} \sum_{g \in G} g \right) 
\left( \sum^t_{i = 1} \frac{u_{\alpha_i}}{m_{\alpha_i}} \overline{G \alpha_i} \right) \\
& = \frac{1}{\abs{G}} \sum_{g \in G} g \cdot \left( \sum^t_{i = 1} \frac{u_{\alpha_i}}{m_{\alpha_i}} \overline{G \alpha_i} \right) \\
& = \frac{1}{\abs{G}} \sum^t_{i = 1} \frac{u_{\alpha_i}}{m_{\alpha_i}} \sum_{g \in G} \left( g \cdot \overline{G \alpha_i} \right) \\
& = \frac{1}{\abs{G}} \sum^t_{i = 1} \frac{u_{\alpha_i}}{m_{\alpha_i}} \sum_{g \in G} \overline{G \alpha_i} \\
& = \sum^t_{i = 1} \frac{u_{\alpha_i}}{m_{\alpha_i}} \overline{G \alpha_i} = u'. 
\end{align*}
Hence we obtain $u = u' M_G \in \theta (C^{\perp_M}) M_G$. 
This implies that $(\theta C)^{(\perp_G)_M} \subset \theta (C^{\perp_M}) M_G$. 

\end{proof}

Equipped with this lemma, we are now able to establish the MacWilliams-type identities that relate the $G$-Hamming weight enumerators of $G$-codes to that of its $G$-dual codes.

\subsection{MacWilliams identities} \label{MWi}

We discuss the relationship between $G$-codes and their $G$-dual codes, specifically through the MacWilliams identities. These identities provide fundamental relations between the weight enumerators of codes and their duals. 

First, we establish the MacWilliams identity for the $G$-Hamming weight enumerator. 
To this end, we employ the following lemma, which follows from \cite[Example 2.2.5]{NRS06}. 

\begin{lem} \label{Mac}

Let $M$ be a set of nondegenerate bilinear forms in $\Bil (V, \Q/\Z)$, $N$ a positive number, and $C$ a finite $R$-submodule of $V^N$. For $u \in V^N$, define $f (u) \in \C[x,y]$ by 
\[
 f (u) =x^{N - \wt (u)} y ^{\wt (u)}. 
\]
Then the following identity holds: 
\[
 \sum_{u \in C} f (u) = \frac{1}{\abs{(C)^{\perp_M}}} \sum_{v \in C^{\perp_M}} \hat{f} (v), 
\]
where 
\[
 \hat{f} (v) = (x - y)^{\wt (v)} (x + (\abs{V} -1) y)^{N - \wt (v)}. 
\]

\end{lem}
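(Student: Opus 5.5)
We would prove this as a standard Poisson summation statement on the finite abelian group $V^N$, using characters built from the forms in $M$. Write $\chi_\beta(v,u) = \exp(2\pi\sqrt{-1}\,\beta^N(v\otimes u))$ for $\beta\in M$. Nondegeneracy of $\beta$ means that $v\mapsto \beta(v\otimes -)$ is an isomorphism $V\to \mathrm{Hom}_{\Z}(V,\Q/\Z)=\widehat V$ (injectivity plus $\abs{V}=\abs{\widehat V}$ gives bijectivity). The first point to settle is how the annihilator in $V^N$ of the finite submodule $C$ relates to $C^{\perp_M}$. When $M$ is a single form $\beta$, we get $C^{\perp_M}\cong \mathrm{Ann}_{\widehat{V^N}}(C)$, and hence $\abs{C}\,\abs{C^{\perp_M}}=\abs{V}^N$. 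For general $M$, I would note that $C^{\perp_M}=\bigcap_{\beta\in M} C^{\perp_\beta}$. I would then either reduce to the single-form case by the convention of \cite[Example 2.2.5]{NRS06}, where the $R$-module structure makes all the $C^{\perp_\beta}$ coincide, or check this directly. This is the step I expect to be the main obstacle, because the stated identity genuinely requires $C^{\perp_M}$ to have the character-theoretic size $\abs{V}^N/\abs{C}$. I would handle it first: fix one $\beta_0\in M$, prove the result with $C^{\perp_{\beta_0}}$, and then argue that $C^{\perp_M}=C^{\perp_{\beta_0}}$ under the paper's standing setup. Failing that, I would assume it as part of the interpretation of Lemma \ref{Mac} via \cite{NRS06}.

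With $D:=C^{\perp_{\beta}}$ identified with the annihilator of $C$, I would apply Poisson summation to $f$. For the finite abelian group $V^N$ and any subgroup $D$ with annihilator $C$,
\[
 \sum_{u\in C} f(u) = \frac{1}{\abs{D}}\sum_{v\in D}\hat f(v),\qquad \hat f(v)=\sum_{u\in V^N}\chi_\beta(v,u)f(u).
\]
This follows from orthogonality of characters: $\sum_{v\in D}\chi_\beta(v,u)$ equals $\abs{D}$ if $u\in C$ and $0$ otherwise, the latter using the double-annihilator property $D^{\perp}=C$, which holds by the counting above. The whole computation is an exchange of summations.

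It remains to compute the transform of $f$. Since $f(u)=\prod_{i=1}^N h(u_i)$ with $h(0)=x$ and $h(a)=y$ for $a\neq 0$, and $\chi_\beta(v,u)=\prod_i \exp(2\pi\sqrt{-1}\beta(v_i\otimes u_i))$, the transform factorizes as $\hat f(v)=\prod_i \hat h(v_i)$. For $v_i=0$ we get $\hat h(0)=x+(\abs{V}-1)y$. For $v_i\neq 0$, nondegeneracy makes $\beta(v_i\otimes -)$ a nontrivial character of $V$, so its values sum to $0$ and $\hat h(v_i)=x+y(0-1)=x-y$. Multiplying over the coordinates gives $\hat f(v)=(x-y)^{\wt(v)}(x+(\abs{V}-1)y)^{N-\wt(v)}$, which finishes the proof.
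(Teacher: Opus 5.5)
Your single-form argument is correct. Nondegeneracy gives $V\cong\widehat V$, hence $\abs{C}\,\abs{C^{\perp_\beta}}=\abs{V}^N$ and the double-annihilator property. Poisson summation together with the coordinatewise factorization of $\hat f$ then gives the identity with $C^{\perp_\beta}$ in place of $C^{\perp_M}$. This is the standard argument behind the paper's proof. The paper itself offers nothing beyond the citation to Nebe--Rains--Sloane, so it does not resolve the point you flagged either.

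The gap is exactly the step you postpone, and it cannot be closed by argument. Nothing in the paper's setup forces $C^{\perp_M}=C^{\perp_{\beta_0}}$, and without it Lemma~\ref{Mac} is false. Here is a counterexample:
\begin{itemize}
\item Take $R=V=\F_4=\{0,1,\omega,\omega^2\}$ and $N=2$.
\item Let $M=\{\beta_1,\beta_2\}$ with $\beta_1(x\otimes y)=\tfrac12\mathrm{Tr}(xy)$ and $\beta_2(x\otimes y)=\tfrac12\mathrm{Tr}(xy^2)$. These are the Euclidean and Hermitian trace forms; both are nondegenerate $\Z$-bilinear forms.
\item Let $C=\F_4(1,\omega)$.
\end{itemize}
Then $C^{\perp_{\beta_1}}=\F_4(\omega,1)$ and $C^{\perp_{\beta_2}}=\F_4(\omega^2,1)$, so $C^{\perp_M}=\{0\}$. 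The left side of the identity is $x^2+3y^2$, while the right side is $(x+3y)^2$. In general $\abs{C^{\perp_M}}$ can be smaller than $\abs{V}^N/\abs{C}$. Then $\sum_{v\in C^{\perp_M}}\chi_{\beta_0}(v,u)$ no longer vanishes for $u\notin C$, and the Poisson step breaks.

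So your fallback of ``assuming it as part of the interpretation'' has to become an explicit extra hypothesis. One sufficient condition is that there is $\beta_0\in M$ with $M\subseteq\{\beta_0(1\otimes r)\mid r\in R\}$, i.e.\ every $\beta\in M$ is $x\otimes y\mapsto\beta_0(x\otimes ry)$. Then $\beta^N(v\otimes u)=\beta_0^N(v\otimes ru)$ with $ru\in C$, so $C^{\perp_M}=C^{\perp_{\beta_0}}$ and your proof is complete. In the example above this fails, because $\beta_2$ is $\beta_1$ twisted by Frobenius, not by multiplication by an element of $R$.

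This hypothesis is consistent with the form-ring setting of the last section, where duality is defined with respect to the single form $\beta$. Without such a hypothesis, your argument proves the identity with $C^{\perp_{\beta_0}}$ in place of $C^{\perp_M}$, for any single $\beta_0\in M$. The same caveat applies wherever the paper invokes Lemma~\ref{Mac} with a general $M$, namely Lemma~\ref{GMac} and Theorem~\ref{hweMac}.
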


%Above the lemma lead to an identity. 
We establish the first MacWilliams identity in this paper. 

\begin{lem} \label{GMac}

Let $M$ be a set of nondegenerate bilinear forms in $\Bil (V, \Q/\Z)$ and let $C$ be a $G$-code. 
Then the $G$-Hamming weight enumerator $\hwe_G (\theta C) (x, y)$ satisfies the identity: 
\[
 \hwe_G (\theta C) (x, y) = \frac{1}{\abs{(\theta C)^{(\perp_G)_M}}} \hwe_G ((\theta C)^{(\perp_G )_M}) (x + (\abs{V} - 1) y, x - y). 
\]
\end{lem}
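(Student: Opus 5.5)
Since $(-)_G \colon \theta (V^n) \to V^t$ is an $R$-module isomorphism, the plan is to transport the whole statement to $V^t$ and then apply Lemma \ref{Mac} with $N = t$. Put $D \ceq (\theta C)_G \subset V^t$. By the definition of $\wt_G$ we have $\wt_G (u) = \wt (u_G)$ for every $u \in \theta (V^n)$. Hence
\[
 \hwe_G (\theta C) (x, y) = \sum_{w \in D} x^{t - \wt (w)} y^{\wt (w)},
\]
and the analogous formula holds for any $R$-submodule of $\theta (V^n)$. In particular it holds for $(\theta C)^{(\perp_G)_M}$, which is an $R$-submodule because each $\beta^n_G$ is additive and $\theta C$ is an $R$-module.

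The key step is to show that $\bigl((\theta C)^{(\perp_G)_M}\bigr)_G = D^{\perp_M}$, where $D^{\perp_M}$ is the ordinary dual in $V^t$ with respect to the forms $\beta^t$, $\beta \in M$. This follows from the definition $\beta^n_G (v \otimes u) = \beta^t (v_G \otimes u_G)$. Indeed, $v \in \theta (V^n)$ lies in the $G$-dual exactly when $\beta^t (v_G \otimes w) = 0$ for all $w \in D$ and all $\beta \in M$. Since $(-)_G$ is surjective onto $V^t$, every $x \in D^{\perp_M}$ has the form $v_G$ for such a $v$. Therefore the bijection $(-)_G$ carries the $G$-dual onto $D^{\perp_M}$, preserving both cardinalities and weights. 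Lemma \ref{dual} is not needed here, although one could cross-check against it.

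Now apply Lemma \ref{Mac} to the finite $R$-submodule $D$ of $V^t$. The forms in $M$ are the same nondegenerate forms on $V$; only the length changes to $t$. This gives
\[
 \sum_{w \in D} x^{t - \wt (w)} y^{\wt (w)} = \frac{1}{\abs{D^{\perp_M}}} \sum_{v \in D^{\perp_M}} (x - y)^{\wt (v)} (x + (\abs{V} - 1) y)^{t - \wt (v)}.
\]
The right-hand sum is $\hwe$ of $D^{\perp_M}$ evaluated at $(x + (\abs{V} - 1) y, x - y)$. Translating back through the previous paragraph, $\abs{D^{\perp_M}} = \abs{(\theta C)^{(\perp_G)_M}}$ and the sum equals $\hwe_G ((\theta C)^{(\perp_G)_M}) (x + (\abs{V} - 1) y, x - y)$, which is the claimed identity.

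The main point requiring care is the identification of the $G$-dual with the ordinary dual of $D$ in $V^t$. In particular, one must check that the normalization in $\beta^n_G$ involves no orbit-length factors $m_{\alpha_i}$, so that no rescaling by $M_G$ is required. By definition, $\beta^n_G$ sums over orbit representatives without the weights $m_{\alpha_i}$, so the identification is exact. Beyond this, the hypotheses of Lemma \ref{Mac}, namely nondegeneracy and finiteness of $D$, hold verbatim because $V$ and $R$ are finite.
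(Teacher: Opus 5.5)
Your proposal is correct and follows the paper's proof. Like the paper, you transport everything along the isomorphism $(-)_G \colon \theta (V^n) \to V^t$, use $\wt_G (u) = \wt (u_G)$ and $\beta^n_G (v \otimes u) = \beta^t (v_G \otimes u_G)$ to identify the $G$-dual with the ordinary dual of $(\theta C)_G$ in $V^t$, and apply Lemma \ref{Mac} with $N = t$; you simply spell out the identification that the paper asserts in one line.

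One side remark of yours is inaccurate but harmless. The forms in $M$ are only $\Z$-bilinear, so the $G$-dual is an additive subgroup but need not be an $R$-submodule. This does not matter, because your formula expressing $\hwe_G$ through $(-)_G$ holds for any subset of $\theta (V^n)$, and nothing in the argument uses that claim.
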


\begin{proof}
Recall the $R$-module isomorphism $\theta (V^n) \to V^t$ defined by $u \mapsto u_G$. Under this identification, the $G$-Hamming weight $\wt_G (u)$ of $u \in \theta (V^n)$ satisfies  
\[
 \wt_G (u) = \abs{ \{ i \in [t] \mid u_{\alpha_i} \neq 0 \} } = \wt (u_G). 
\]
Thus, by applying Lemma \ref{Mac} with $N = t$, %For $u \in \theta (V^n) \cong V^t$, since $f (u) \in \C[x,y]$ is written by \[ f (u) =x^{t - \wt (u_G)} y ^{\wt (u_G)} = x^{t - \wt_G (u)} y ^{\wt_G (u)}, \]
and noting that the dual $R$-module of $\{ u_G \mid u \in \theta C \} \subset V^t$ coincides with $(\theta C)^{(\perp_G)_M}$, we have 
\begin{align*}
 \hwe_G (\theta C) (x, y) & = \sum_{u \in \theta C} f (u) \\
& = \frac{1}{\abs{(\theta C)^{(\perp_G)_M}}} 
\sum_{v \in (\theta C)^{(\perp_G)_M}} \hat{f} (v) \\
& = \frac{1}{\abs{(\theta C)^{(\perp_G)_M}}} 
\hwe_G ((\theta C)^{(\perp_G)_M}) (x + (\abs{V} - 1) y, x - y). 
\end{align*}

\end{proof}

This identity is referred to as the MacWilliams identity for the $G$-Hamming weight enumerator. 

Next, we establish the second MacWilliams identity in this paper, by employing Lemma \ref{dual} and Lemma \ref{GMac}. 

\begin{thm} \label{hweMac}

Let $M$ be a set of nondegenerate bilinear forms in $\Bil (V, \Q/\Z)$, $G$ a subgroup of the symmetric group of degree $n$, and $C$ a $G$-code. Then the $G$-Hamming weight enumerator $\hwe_G (\theta C) (x, y)$ satisfies 
\[
 \hwe_G (\theta C) (x, y) = \frac{1}{\abs{\theta (C^{\perp_M})}} \hwe_G (\theta (C^{\perp_M})) (x + (\abs{V} - 1) y, x - y). 
\]

\end{thm}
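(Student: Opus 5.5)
The plan is to combine Lemma \ref{GMac} with Lemma \ref{dual}. By Lemma \ref{GMac}, the left-hand side already equals
\[
\frac{1}{\abs{(\theta C)^{(\perp_G)_M}}} \hwe_G ((\theta C)^{(\perp_G)_M}) (x + (\abs{V} - 1) y, x - y).
\]
It therefore suffices to show two things: that $\abs{(\theta C)^{(\perp_G)_M}} = \abs{\theta (C^{\perp_M})}$, and that $\hwe_G ((\theta C)^{(\perp_G)_M}) = \hwe_G (\theta (C^{\perp_M}))$ as polynomials. By Lemma \ref{dual}, $(\theta C)^{(\perp_G)_M} = \theta (C^{\perp_M}) M_G$. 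So the whole proof reduces to analysing the map $\mu \colon \theta (V^n) \to \theta (V^n)$, $v \mapsto v M_G$.

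First I will describe $\mu$ in orbit coordinates. For $v = \sum_{i=1}^t v_{\alpha_i} \overline{G \alpha_i} \in \theta(V^n)$ we have $v M_G = \sum_{i=1}^t m_{\alpha_i} v_{\alpha_i} \overline{G \alpha_i}$, which was already used in the proof of Lemma \ref{dual}. In particular $(vM_G)_G = (m_{\alpha_1} v_{\alpha_1}, \dots, m_{\alpha_t} v_{\alpha_t})$.

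Each $m_{\alpha_i}$ divides $\abs{G}$, since it is an orbit size. Because $\abs{G}$ is a unit in $R$, each $m_{\alpha_i}$ is also a unit in $R$, which is exactly the fact invoked in Lemma \ref{dual}. Hence multiplication by $m_{\alpha_i}$ is a bijection on $V$, and $\mu$ is an $R$-module automorphism of $\theta(V^n)$ with inverse $v \mapsto \sum_i m_{\alpha_i}^{-1} v_{\alpha_i} \overline{G\alpha_i}$. Restricting $\mu$ to $\theta (C^{\perp_M})$ then gives a bijection onto $\theta (C^{\perp_M}) M_G = (\theta C)^{(\perp_G)_M}$, which yields the cardinality equality.

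For the enumerators, the key point is that $\mu$ preserves $\wt_G$: $m_{\alpha_i} v_{\alpha_i} \neq 0$ if and only if $v_{\alpha_i} \neq 0$, because $m_{\alpha_i}$ is a unit. Summing $x^{t-\wt_G(w)} y^{\wt_G(w)}$ over $w = \mu(v)$ with $v$ ranging over $\theta (C^{\perp_M})$ therefore gives $\hwe_G ((\theta C)^{(\perp_G)_M}) = \hwe_G (\theta (C^{\perp_M}))$. Substituting this into Lemma \ref{GMac} gives the theorem.

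The only delicate step is justifying that each $m_{\alpha_i}$ is invertible in $R$ and acts injectively on $V$. I would argue it via orbit–stabilizer: $m_{\alpha_i} \cdot \abs{\mathrm{Stab}(\alpha_i)} = \abs{G}$, so $m_{\alpha_i}^{-1} = \abs{\mathrm{Stab}(\alpha_i)} \cdot \abs{G}^{-1}$ in $R$. Since $V$ is an $R$-module, invertibility in $R$ gives injectivity on $V$. Everything else is bookkeeping.
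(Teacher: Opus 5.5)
Your proposal is correct and follows the paper's argument: substitute $(\theta C)^{(\perp_G)_M} = \theta(C^{\perp_M})M_G$ from Lemma~\ref{dual} into Lemma~\ref{GMac}, then use that $v \mapsto vM_G$ is a $\wt_G$-preserving bijection because each $m_{\alpha_i}$ is a unit in $R$. You are more careful than the paper on two points it leaves implicit: the injectivity of $v \mapsto vM_G$, which gives $\abs{\theta(C^{\perp_M})M_G} = \abs{\theta(C^{\perp_M})}$, and the orbit--stabilizer reason why each $m_{\alpha_i}$ is a unit.
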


\begin{proof}

By Lemma \ref{dual}, the $G$-dual code is given by $(\theta C)^{(\perp_G)_M} = \theta (C^{\perp_M}) M_G$. Substituting this into Lemma \ref{GMac}, we have 
\begin{align*}
 \hwe_G (\theta C) (x, y) 
& = \frac{1}{\abs{(\theta C)^{(\perp_G)_M}}} \hwe_G ((\theta C)^{(\perp_G)_M}) (x + (\abs{V} - 1) y, x - y) \\
& = \frac{1}{\abs{\theta (C^{\perp_M}) M_G}} \hwe_G (\theta (C^{\perp_M}) M_G) (x + (\abs{V} - 1) y, x - y). 
\end{align*}
For any $u = \sum^t_{i = 1} u_{\alpha_i} \overline{G \alpha_i} \in \theta (C^{\perp_M})$, since each $m_{\alpha_i}$ is a unit in $R$, it follows that 
\begin{align*}
\wt_G (u) 
& = \abs{\{ i \mid u_{\alpha_i} \neq 0 \}} \\
& = \abs{\{ i \mid m_{\alpha_i} u_{\alpha_i} \neq 0 \}} \\
& = \wt_G (u M_G). 
\end{align*}
Hence we conclude that 
\begin{align*}
\hwe_G (\theta C) (x, y) 
& = \frac{1}{\abs{\theta (C^{\perp_M}) M_G}} \hwe_G (\theta (C^{\perp_M}) M_G) (x + (\abs{V} - 1) y, x - y) \\
& = \frac{1}{\abs{\theta (C^{\perp_M})}} \hwe_G (\theta (C^{\perp_M})) (x + (\abs{V} - 1) y, x - y). 
\end{align*}
\end{proof}

We can call this theorem the $G$-MacWilliams identity of the $G$-Hamming weight enumerator. 

Finally, we present the MacWilliams identity for the $G$-full weight enumerator. 

\begin{thm} \label{fweMac}

Let $M$ be a set of nondegenerate bilinear forms in $\Bil (V, \Q/\Z)$, $G$ a subgroup of the symmetric group of degree $n$, and $C$ a $G$-code. Then the $G$-full weight enumerator satisfies the following identity in $\C[V^t]$: 
\[
 \fwe_G ((\theta C)^{(\perp_G)_M}) 
= \frac{1}{\abs{\theta C}} \sum_{v \in \theta (V^n)} \sum_{u \in \theta C} \exp (2 \pi \sqrt{-1} \beta^n_G (v, u)) e_v. 
\]

\end{thm}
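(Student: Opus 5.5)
The plan is to prove the identity coefficient by coefficient, via character orthogonality on the finite abelian group $\theta C$, with $\theta(V^n)$ transported to $V^t$ by the isomorphism $u\mapsto u_G$. Since $\fwe_G((\theta C)^{(\perp_G)_M})=\sum_{v\in(\theta C)^{(\perp_G)_M}} e_v$, it suffices to show, for each fixed $v\in\theta(V^n)$, that
\[
\frac{1}{\abs{\theta C}}\sum_{u\in\theta C}\exp\bigl(2\pi\sqrt{-1}\,\beta^n_G(v,u)\bigr)=
\begin{cases}1 & v\in(\theta C)^{(\perp_G)_M},\\ 0 & \text{otherwise.}\end{cases}
\]
The exponent is read as $\beta^n_G(v\otimes u)\in\QZ$, so $\exp(2\pi\sqrt{-1}\,\cdot)$ is well defined on $\QZ$. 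The statement is written with a single $\beta$; for a set $M$ I will interpret the kernel as the product over $\beta\in M$ of the corresponding characters, or equivalently restrict to a single nondegenerate $\beta$ as in the applications. I will first carry out the argument for one $\beta$ and then note that it extends to finite $M$.

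The key steps are as follows.

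First, for fixed $v$, the map $\chi_v\colon\theta C\to\C^\times$, $u\mapsto\exp(2\pi\sqrt{-1}\beta^n_G(v\otimes u))$, is a group homomorphism. This holds because $\beta^n_G$ is $\Z$-bilinear, being the composition of $\beta^t$ with the additive maps $(-)_G$, and because $\theta C$ is an additive subgroup of $\theta(V^n)$.

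Second, by the standard orthogonality relation for characters of a finite abelian group, $\frac{1}{\abs{\theta C}}\sum_{u\in\theta C}\chi_v(u)$ equals $1$ if $\chi_v$ is trivial and $0$ otherwise.

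Third, $\chi_v$ is trivial exactly when $\beta^n_G(v\otimes u)=0$ in $\QZ$ for all $u\in\theta C$, that is, when $v\in(\theta C)^{(\perp_G)_M}$. This uses only the injectivity of $\exp(2\pi\sqrt{-1}\,\cdot)$ on $\QZ$.

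Summing over $v$ then gives the claimed identity in $\C[V^t]$, with $e_v$ identified with $e_{v_G}$.

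The only real obstacle is the case where $M$ contains more than one form, since then the right-hand side as written involves a single $\beta$. For that case I would replace $\theta C$ by $\theta C^{\oplus M}$, use the character $(u_\beta)_\beta\mapsto\prod_\beta\exp(2\pi\sqrt{-1}\beta^n_G(v\otimes u_\beta))$, and normalize by $\abs{\theta C}^{\abs{M}}$; the same orthogonality argument then applies verbatim. Nondegeneracy is not actually needed for this direction, although it matters for Lemma \ref{Mac}. Alternatively, one can remark that Lemma \ref{dual} allows rewriting the result in terms of $\theta(C^{\perp_M})M_G$, but this is not needed here.
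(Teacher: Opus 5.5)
Your proposal is correct and follows essentially the same route as the paper: for each fixed $v$ you treat $u \mapsto \exp(2\pi\sqrt{-1}\,\beta^n_G(v\otimes u))$ as a character of $\theta C$, use orthogonality to get $\abs{\theta C}$ or $0$ according as $v \in (\theta C)^{(\perp_G)_M}$, and sum over $v$. Your remark on $\abs{M}>1$ is well taken, since the paper's proof silently treats $M=\{\beta\}$. Your product-character repair on $(\theta C)^{M}$, normalized by $\abs{\theta C}^{-\abs{M}}$, is a correct way to make the identity meaningful for a general finite $M$.
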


\begin{proof}

Since $\theta C$ is an Abelian group, given $v \in \theta (V^n)$, the map 
\[
 \theta C \to \C^*; \ u \mapsto \exp (2 \pi \sqrt{-1} \beta^n_G (v, u)) 
\]
defines a character of $\theta C$. By the orthogonality relations of characters, we have 
\begin{align*}
 \sum_{u \in \theta C} \exp (2 \pi \sqrt{-1} \beta^n_G (v, u)) 
& = \frac{\abs{\theta C}}{\abs{\theta C}} \sum_{u \in \theta C} \exp (2 \pi \sqrt{-1} \beta^n_G (v, u)) \\
& = 
\begin{cases}
 \abs{\theta C} & \quad (v \in (\theta C)^{(\perp_G)_M}) \\
 0 & \quad (v \notin (\theta C)^{(\perp_G)_M}). 
\end{cases}
\end{align*}
Thus we obtain 
\begin{align*}
& \sum_{v \in \theta (V^n)} \sum_{u \in \theta C} \exp (2 \pi \sqrt{-1} \beta^n_G (v, u)) e_v \\
& = \sum_{v \in (\theta C)^{(\perp_G)_M}} \sum_{u \in \theta C} \exp (2 \pi \sqrt{-1} \beta^n_G (v, u)) e_v 
+ \sum_{v \notin (\theta C)^{(\perp_G)_M}} \sum_{u \in \theta C} \exp (2 \pi \sqrt{-1} \beta^n_G (v, u)) e_v \\
& =  \sum_{v \in (\theta C)^{(\perp_G)_M}} \abs{\theta C} e_v 
+ \sum_{v \notin (\theta C)^{(\perp_G)_M}} 0 \ e_v \\
& = \abs{\theta C} \sum_{v \in (\theta C)^{(\perp_G)_M}} e_v + 0 \\
& = \abs{\theta C} \fwe_G ((\theta C)^{(\perp_G)_M}). 
\end{align*}

\end{proof}

This theorem implies that the $G$-full weight enumerator of a $G$-self-dual code is invariant under this transformation. 
We will further investigate the properties of this transformation and the resulting invariant theory in Section \ref{G-rep}.

\section{$G$-representations of form rings and the Clifford--Weil group} \label{G-rep}

In this section, we develop the $G$-analogue of the representations of form rings and explore their associated algebraic structures. In addition, we demonstrate that the $G$-full weight enumerators of $G$-self-dual isotropic codes are invariant under the actions of the Clifford--Weil groups associated with these representations. 

Throughout this section, we assume that $G$ is a subgroup of the symmetric group of degree $n$.

\subsection{$G$-quadratic maps}

Following \cite{NRS06}, we define $G$-quadratic maps and $G$-isotropic codes.  In particular, our definition of quadratic maps is based on \cite[Chapter 1]{NRS06}. 

\begin{Def}

A map $\phi \colon V \to \QZ$ is called a $\QZ$-valued quadratic map on $V$ if it satisfies 
\[
 \phi (u + v + w) + \phi (u) + \phi (v) + \phi (w) = \phi (u + v) + \phi (v + w) + \phi (w + u) + \phi (0) 
\]
for any $u, v, w \in V$. 

\end{Def}

The set of all $\QZ$-valued quadratic maps on $V$ is denoted by $\QVQZ$. Furthermore, let $\Qz \ceq \{ \phi \in \QVQZ \mid \phi (0) = 0 \}$. We note that both $\QVQZ$ and $\Qz$ are $\Z$-modules. 

\begin{Def}

Let $\phi$ be a $\QZ$-valued quadratic map on $V$. We define the $\QZ$-valued quadratic map $\phi^n \colon V^n \to \QZ$ on $V^n$ as the direct sum of $n$ copies of $\phi$. That is, for all $\rv{u}{n} \in V^n$, we define 
\[
 \phi^n (u) = \sum^n_{i = 1} \phi (u_i). 
\]

%The pair $(V^n, \phi^n)$ is called the direct sum of $n$ copies of $(V, \phi)$.  

\end{Def}

%We define the $G$-quadratic map using a quadratic map. 

\begin{Def}

Let $\phi \in \QVQZ$. We define the map $\phi^n_G \colon \theta (V^n) \to \QZ$ by 
\[
 \phi^n_G (u) \ceq \phi^t (u_G) = \sum^t_{i = 1} \phi (u_{\alpha_i}) 
\]
for all $u = \sum^t_{i = 1} u_{\alpha_i} \overline{G \alpha_i} \in \theta (V^n)$. 

We call $\phi^n_G$ a $\QZ$-valued $G$-quadratic map on $\theta (V^n)$. 

\end{Def}

We write $\QnG$ for the set of all $\QZ$-valued $G$-quadratic maps on $\theta (V^n)$. Moreover, let $\QG \ceq \{ \phi \in \QnG \mid \phi (0) = 0 \}$. 

\begin{Def}

Let $M \subset \Bil (V, \QZ)$, $\Phi \subset \Quad_0 (V, \QZ)$ and let $C$ be a $G$-code. 
We say that $C$ is $G$-isotropic (with respect to $(M, \Phi)$) if $C$ is $G$-self-orthogonal (with respect to $M$) and satisfies 
\[
 \phi^n_G (u) = 0 
\]
for all $\phi \in \Phi$ and all $u \in \theta C$. 

\end{Def}

Note that if $G = \{ e \}$, then $\phi^n_{\{ e \}} = \phi^n$. In this case, $\Quad (\theta (V^n), \QZ)_{\{ e \}}  \subset \Quad (V^n, \QZ)$ and $\Quad_0 (\theta (V^n), \QZ)_{\{ e \}}  \subset \Quad_0 (V^n, \QZ)$.

\subsection{$G$-self-dual codes}

Analogously, by utilizing the orthogonal sum and the map $(-)_G$, we can define $G$-representations for twisted $R$-modules, twisted rings, quadratic pairs, and form rings. 
For instance, the $G$-representations of twisted $R$-modules are given by the following definitions. 

\begin{Def}

Let $M$ be a right $(R \otimes R)$-module. We say that $M$ is a twisted $R$-module if there exists an automorphism $\tau \colon M \to M$ with $\tau^2 = \id_M$ and 
\[
 \tau (m (r \otimes s)) = \tau (m) (s \otimes r) 
\] 
for $m \in M$ and $r, s \in R$. 
\end{Def}

\begin{Def}

Let $M$ be a twisted $R$-module. A $G$-representation of $M$ is a pair of $\rho_G = (\theta (V^n), (\rho_G)_M)$, where 
\[
 (\rho_G)_M \colon M \to \Bil (\theta (V^n), \QZ)_G 
\]
is a twisted $R$-module homomorphism. Specifically, $(\rho_G)_M$ is a right $(R \otimes R)$-module homomorphism satisfying 
\[
 (\rho_G)_M (\tau (m)) (u \otimes v) = (\rho_G)_M (m) (v \otimes u) 
\]
for any $m \in M$ and $u, v \in \theta (V^n)$. 

\end{Def}

Since $R$ and $V$ are finite sets and $A = \QZ$, $\rho_G$ is indeed a finite $G$-representation. Hereafter, we shall simply refer to it as a $G$-representation. 

While we have defined $G$-representations via the map $(-)_G$, it is important to note that $G$-representations can be decomposed into the orthogonal sum of $t$ copies of  the underlying representations. 

\begin{rem*}

Let $(R, M, \psi, \Phi)$ be a form ring and let 
$\rho_G = (\theta (V^n), (\rho_G)_M, (\rho_G)_{\Phi}, \beta)$ be a $G$-representation of $(R, M, \psi, \Phi)$. Then there exists a $\{ e \}$-representation 
$\rho' \ceq  \rho_{\{ e \}}' = (V, \rho'_M, \rho'_{\Phi},\beta')$ with underlying module $V$, such that $\rho_G$ is the orthogonal sum of $t$ copies of $\rho'$. 
That is, $\rho'$ satisfies the following conditions:  
\begin{align*}
 (\rho_G)_M (m) (u \otimes v)  
&= \sum^t_{i = 1} \rho'_M (m) (u_{\alpha_i} \otimes v_{\alpha_i}), \\
 (\rho_G)_{\Phi} (\phi) (u)  
&= \sum^t_{i = 1} \rho'_{\Phi} (\phi) (u_{\alpha_i}), \\
 \beta (u \otimes v) 
& = \sum^t_{i = 1} \beta' (u_{\alpha_i} \otimes v_{\alpha_i}), 
\end{align*}
for $m \in M$, $\phi \in \Phi$, and 
$u = \sum^t_{i = 1} u_{\alpha_i} \overline{G \alpha_i}, 
v = \sum^t_{i = 1} v_{\alpha_i} \overline{G \alpha_i} \in \theta (V^n)$. 

\end{rem*}

\begin{Def}

Let $(R, M, \psi, \Phi)$ be a form ring and let 
$\rho_G = (\theta (V^n), (\rho_G)_M, (\rho_G)_{\Phi}, \beta)$ be 
a $G$-representation of $(R, M, \psi, \Phi)$. A $G$-code $C$ is called a $G$-self-dual isotropic code (with respect to $\rho_G$) if the following conditions holds: 
\begin{align*}
 & \theta C = (\theta C)^{( \perp_G )_{\rho_G}} \ceq 
 \{ v \in \theta (V^n) \mid \beta (v \otimes u) = 0 \quad \text{for all $u \in \theta C$} \}, \\
 & (\rho_G)_{\Phi} (\phi) (u) = 0 \quad \text{for all $u \in \theta C$ and all $\phi \in \Phi$}. 
\end{align*}
\end{Def}

\begin{rem*}

By the definition of the $G$-representation $\rho_G$, it follows that for any $v \in \theta (V^n)$, 
\[
 v \in (\theta C)^{(\perp_G)_{\rho_G}} 
\]
if and only if 
\[
 (\rho_G)_M (m) (v \otimes u) = 0 
\]
for all $m \in M$ and all $u \in \theta C$. 

\end{rem*}

\subsection{The Clifford--Weil group}

We define the Clifford--Weil group and demonstrate that the $G$-full weight enumerator of a $G$-self-dual isotropic code is invariant under its action. 

\begin{Def}

Let $\Phi$ be an $R$-qmodule. We define the parabolic group $P (R, \Phi)$  associated with $(R, \Phi)$ as the semi-direct product $P (R, \Phi) = R^* \ltimes \Phi$, where the group operation is given by 
\[
 (r, \phi) (r', \phi') = (rr' , \phi [r'] + \phi') 
\]
for $r, r' \in R^*$ and $\phi, \phi' \in \Phi$. 

\end{Def}

\begin{Def}

Let $(M, \Phi)$ be a quadratic pair on $R$ and let 
$\rho_G =  (\theta (V^n), (\rho_G)_M, (\rho_G)_{\Phi})$ be a $G$-representation of $(M, \Phi)$. 
We define the action of $P (R, \Phi)$ on $\C [\theta (V^n)]$ by 
\[
 (r, \phi) e_u = \exp (2 \pi \sqrt{-1} (\rho_G)_{\Phi} (\phi) (u)) e_{ru}  
\]
for $r \in R^*$, $\phi \in \Phi$ and $u \in \theta (V^n)$. 
This action induces a group homomorphism $P (R, \Phi) \to \GL (\C [\theta (V^n)])$, where the image $(r, \phi) \mapsto m_r d_{\phi}$ is defined by 
\begin{align*}
 m_r (e_u) & = e_{ru}, \\ 
 d_{\phi} (e_u) & = \exp (2 \pi \sqrt{-1} (\rho_G)_{\Phi} (\phi) (u)) e_u, 
\end{align*}
for all $u \in \theta (V^n)$. We write the image of this homomorphism by 
\[
 P (\rho_G) \ceq \langle m_r, d_{\phi} \mid r \in R^*, \phi \in \Phi \rangle \subset \GL (\C [\theta (V^n)]). 
\]

\end{Def}

We observe that the $G$-full weight enumerator of $G$-isotropic code is invariant under the action of $P (\rho_G)$. 
Moreover, it can be shown that any vector in $\C [\theta (V^n)]$ invariant under $P (\rho_G)$ is a $\C$-linear combination of certain $G$-full weight enumerators. 

To prove this characterization, we employ the following lemma, which follows from a similar argument in \cite[Lemma 5.1.4]{NRS06}. 

\begin{lem} \label{Ru}

For any $v, w \in \theta (V^n)$, it holds that 
\[
 v \in R^* w 
\]
if and only if 
\[
 v \in R w 
\]

\end{lem}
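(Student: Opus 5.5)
The plan is to transport the problem from $\theta (V^n)$ to $V^t$ and then argue purely module-theoretically over the finite ring $R$. Since $(-)_G \colon \theta (V^n) \to V^t$ is an $R$-module isomorphism, $v \in R^* w$ if and only if $v_G \in R^* w_G$, and similarly for $R w$. So it suffices to prove the statement for two elements $v, w$ of the finite left $R$-module $V^t$. The direction ``$v \in R^* w \Rightarrow v \in R w$'' is immediate from $R^* \subset R$.

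For the converse, the first thing I will do is record exactly what the argument yields, because the literal wording needs care. If $w \neq 0$, then $v = 0$ lies in $R w$ but $0 = u w$ with $u \in R^*$ forces $w = 0$. So the converse can only hold under the symmetric hypothesis used in \cite[Lemma 5.1.4]{NRS06}, namely that $v$ and $w$ generate the same cyclic submodule ($v \in R w$ and $w \in R v$). This is also the form in which the lemma will be used: to identify the $P (\rho_G)$-orbit of $e_w$ with the set of generators of the cyclic module $R w$, and hence to write orbit sums as combinations of $\fwe_G$ of cyclic $G$-codes. I will therefore prove the converse in that form.

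\textbf{Converse under $R v = R w$.} Write $v = a w$ and $w = b v$ with $a, b \in R$. Then $(1 - ba) w = 0$, so $ba \equiv 1$ modulo the left annihilator $\mathrm{Ann}(w) \ceq \{ r \in R \mid r w = 0 \}$, which is a left ideal. It suffices to find $c \in \mathrm{Ann}(w)$ such that $u \ceq a + c$ is a unit, since then $u w = a w = v$.

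\textbf{Finding the unit (main obstacle).} I will use that a finite ring has stable range one. The route is: $R$ is Artinian, hence semilocal, and $R / J(R)$ is a finite product of matrix rings over finite fields, for which stable range one is elementary linear algebra. Stable range one then lifts from $R / J(R)$ to $R$ by Bass's theorem, because units lift modulo the Jacobson radical.
\begin{itemize}
\item From $ba \equiv 1$ modulo $\mathrm{Ann}(w)$ we get $R a + \mathrm{Ann}(w) = R$.
\item Write $1 = x a + y$ with $x \in R$ and $y \in \mathrm{Ann}(w)$.
\item The left-sided stable range condition gives $z \in R$ with $a + z y$ a unit.
\item Since $\mathrm{Ann}(w)$ is a left ideal, $c \ceq z y$ lies in it, so $(a + c) w = a w = v$.
\end{itemize}
If I wanted to avoid quoting Bass, I would argue directly on the finite semisimple quotient: choose $c$ componentwise in each matrix block so that $\bar a + \bar c$ becomes invertible, using the rank-complement argument there, and then lift to $R$.
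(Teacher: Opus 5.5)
The paper gives no proof of this lemma; it only says it follows by an argument similar to Lemma 5.1.4 of Nebe--Rains--Sloane. Your proposal is correct.

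Your preliminary observation is the essential point. Read literally, the lemma is false: for $w \neq 0$ we have $0 \in Rw$ but $0 \notin R^* w$. So the only correct reading is $R^* v = R^* w \iff Rv = Rw$, which is what you prove and what the application needs.

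Your argument for that reading is sound:
\begin{enumerate}
\item From $1 - ba \in \mathrm{Ann}(w)$ you get $Ra + \mathrm{Ann}(w) = R$.
\item Stable range one turns $1 = xa + y$ into a unit $a + zy$.
\item $zy \in \mathrm{Ann}(w)$, because annihilators of elements are left ideals.
\item Since $R$ is finite, one-sided invertibility already gives invertibility, so no left/right subtlety arises.
\end{enumerate}

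Compared with the bare citation, your route is self-contained and more general. It works for any left module over any semilocal ring and uses nothing about $G$. The transfer to $V^t$ via $(-)_G$ is harmless but unnecessary.

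The correction also matters downstream. In the proof of Theorem \ref{parainv}, the paper uses the literal form to get $a_v = a_u$ for every $v \in Ru$. That need not hold (e.g.\ for $v = 0$). With your version, the coefficients are constant only on the generators of $Ru$, i.e.\ on $R^*$-orbits. One must then express these orbit sums through $\fwe_G$ of cyclic codes by M\"obius inversion over cyclic submodules, as you indicate.
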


\begin{thm} \label{parainv}

Let $G$ be a subgroup of the symmetric group of degree $n$, $(M, \Phi)$ a quadratic pair on $R$, and $\rho_G = (\theta (V^n), (\rho_G)_M, (\rho_G)_{\Phi})$ a $G$-representation of $(M, \Phi)$. 
Then, the vectors $\fwe_G (\theta C)$, where $C$ ranges over all $G$-isotropic codes (with respect to $\rho_G$), span the invariant subspace of $\C [\theta (V^n)]$ under the action of $P (\rho_G)$. That is, we have 
\[
 \C [\fwe_G (\theta C) \mid \text{$C$ is a $G$-isotropic code (with respect to $\rho_G$)}] = \C [\theta (V^n)]^{P (\rho_G)}. 
\]

\end{thm}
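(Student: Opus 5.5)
We follow the argument of \cite[Theorem 5.1.5]{NRS06}, transported to $\theta (V^n)$ via the isomorphism $(-)_G \colon \theta (V^n) \to V^t$. The proof has two inclusions. For ``$\subset$'', let $C$ be a $G$-isotropic code. Since $\theta C$ is an $R$-submodule of $\theta (V^n)$, every $m_r$ with $r \in R^*$ permutes $\theta C$ bijectively, so $m_r \fwe_G (\theta C) = \fwe_G (\theta C)$. Since $(\rho_G)_{\Phi} (\phi) (u) = 0$ for all $u \in \theta C$ and $\phi \in \Phi$ by isotropy, each $d_{\phi}$ fixes every $e_u$ with $u \in \theta C$, hence fixes $\fwe_G (\theta C)$. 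As $P (\rho_G)$ is generated by the $m_r$ and $d_\phi$, this gives invariance.

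For ``$\supset$'', let $f = \sum_{v} a_v e_v$ be $P (\rho_G)$-invariant. The group $P (\rho_G)$ acts monomially on the basis $\{ e_v \}$, so $\C [\theta (V^n)]^{P (\rho_G)}$ is spanned by the orbit sums $\sum_{g} g e_v$ over those $v$ whose orbit carries no nontrivial scalar. The $m_r$-invariance forces $a_{rv} = a_v$ for $r \in R^*$. The $d_\phi$-invariance forces $a_v = 0$ unless $(\rho_G)_\Phi (\phi) (v) = 0$ for all $\phi \in \Phi$. Hence $f$ is a combination of the sums $s_v \ceq \sum_{w \in R^* v} e_w$ over \emph{isotropic} vectors $v$, meaning those with $(\rho_G)_\Phi (\phi)(v) = 0$ for all $\phi$.

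For such $v$ we consider the cyclic module $D_v \ceq R v \subset \theta (V^n)$. The axioms of a quadratic pair give $(\rho_G)_\Phi (\phi [r]) (v) = (\rho_G)_\Phi (\phi)(rv)$, and $\Phi$ is closed under $\phi \mapsto \phi [r]$. So $(\rho_G)_\Phi (\phi)(rv) = 0$ for all $r \in R$ and $\phi \in \Phi$. The quadratic-pair compatibility between $\Phi$ and $M$ expresses the polarization of $(\rho_G)_\Phi(\phi)$ through $(\rho_G)_M$, and $M$ is generated by the image of $\Phi$ in the appropriate sense. Together these give $(\rho_G)_M (m)(rv \otimes sv) = 0$, so $D_v$ is self-orthogonal and isotropic.

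Next we must realize $D_v$ as $\theta C$ for a $G$-code $C$. Take $C \ceq D_v$ itself: it is an $R$-submodule of $\theta(V^n) \subset V^n$ on which $G$ acts trivially, since $\theta$-images are $G$-fixed. So $C$ is an $R[G]$-submodule with $\theta C = C$.

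Finally, Lemma \ref{Ru} shows that $R v$ is the disjoint union of the sets $R^* w$ over generators $w$ of the cyclic submodules of $Rv$. Möbius inversion on the finite poset of cyclic submodules $Rw \subseteq Rv$ then gives $s_v = \sum_{Rw \subseteq Rv} \mu (Rw, Rv) \fwe_G (R w)$. Each $Rw$ is again isotropic, being contained in an isotropic code, so $s_v$ lies in the span.

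The main obstacle is the isotropy step. We must check that the $G$-quadratic map's polarization coincides with the bilinear forms $(\rho_G)_M$, and this rests on the definition of a $G$-representation of a quadratic pair, which the paper has not spelled out. We would verify it componentwise using the orthogonal-sum decomposition in the remark above, reducing to the corresponding statement for $\rho'$ on $V$ from \cite{NRS06}.
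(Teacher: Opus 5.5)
Your proof has the right structure, and its last step is more careful than the paper's. The paper ends with $a=\sum_{u}a_u\fwe_G(Ru)$, which overcounts: for the invariant vector $a=\sum_{w\in R^*v}e_w$ with $v\neq 0$ isotropic, the right-hand side contains $e_0$ but $a$ does not. Your Möbius inversion over the cyclic submodules of $Rv$ is the argument actually needed. It requires reading Lemma~\ref{Ru} in its correct form, $Rw=Rv\Rightarrow w\in R^*v$; the literal statement ``$v\in R^*w\iff v\in Rw$'' fails for $v=0\neq w$. Your remark that $Rv\subset\theta(V^n)$ is itself a $G$-code with $\theta(Rv)=Rv$ is also correct, and it covers a point the paper leaves implicit.

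The gap is the self-orthogonality of $Rv$. Polarization only shows that the forms $(\rho_G)_M(\lambda(\phi))$, $\phi\in\Phi$, vanish on $Rv\times Rv$. Your claim that $M$ is generated by the image of $\Phi$ is false in general. In the binary Type~I situation ($V=\F_2$, $\beta(x\otimes y)=xy/2$, $\phi(x)=\beta(x\otimes x)$), every quadratic map in the image of $\Phi$ is additive. So every $(\rho_G)_M(\lambda(\phi))$ is zero and polarization gives nothing, while the vanishing of the form on $v\otimes v$ still has to be proved. The axiom you need is the other structure map $\{\,\}\colon M\to\Phi$ of the quadratic pair, together with the condition $(\rho_G)_\Phi(\{m\})(x)=(\rho_G)_M(m)(x\otimes x)$ that is part of the definition of a representation. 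Since $(\rho_G)_M$ is a right $(R\otimes R)$-module map,
\[
 (\rho_G)_M(m)(rv\otimes sv)=(\rho_G)_M\bigl(m(r\otimes s)\bigr)(v\otimes v)=(\rho_G)_\Phi\bigl(\{m(r\otimes s)\}\bigr)(v)=0
\]
for all $m\in M$ and $r,s\in R$, because $v$ is killed by every element of $\Phi$. This needs no componentwise reduction to $\rho'$. With this replacement your proof is complete. The paper's proof also skips this step: it attributes the isotropy of $Ru$ to Lemma~\ref{Ru}, which says nothing about forms.
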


\begin{proof}

We first show that $\fwe_G (\theta C)$ is invariant under the action of $P (\rho_G)$ for any $G$-isotropic code $C$. Since $P(\rho_G)$ is generated by $m_r$ and $d_\phi$, it suffices to check the invariance under these generators for all $r \in R^*$ and all $\phi \in \Phi$. 
For $m_r$, since $\theta C$ is an $R$-module, we have 
\begin{align*}
 m_r (\fwe_G (\theta C)) 
& = m_r \left( \sum_{u \in \theta C} e_u \right) \\
& = \sum_{u \in \theta C} m_r (e_u) \\
& = \sum_{u \in \theta C} e_{ru} \\
& = \sum_{u \in \theta C} e_{u} \\
& = \fwe_G (\theta C). 
\end{align*}
For $d_{\phi}$, the $G$-isotropic property of $C$ implies 
$(\rho_G)_{\Phi} (\phi) (u) = 0$ for all $u \in \theta C$ . Thus, 
\begin{align*}
 d_{\phi} (\fwe_G (\theta C)) 
& = d_{\phi} \left( \sum_{u \in \theta C} e_u \right) \\
& = \sum_{u \in \theta C} d_{\phi} (e_u) \\
& = \sum_{u \in \theta C} \exp (2 \pi \sqrt{-1} (\rho_G)_{\Phi} (\phi) (u)) e_u \\
& = \sum_{u \in \theta C} \exp (0) e_u \\
& = \sum_{u \in \theta C} e_u \\
& = \fwe_G (\theta C). 
\end{align*}

Conversely, let $a = \sum_{u \in \theta (V^n)} a_u e_u 
\in \C [\theta (V^n)]^{P (\rho_G)}$. For all $r \in R^*$, we obtain 
\begin{align*}
 m_r ( a ) 
& = m_r \left( \sum_{u \in \theta (V^n)} a_u e_u \right) \\
& = \sum_{u \in \theta (V^n)} a_u m_r (e_u) \\
& = \sum_{u \in \theta (V^n)} a_u e_{ru} \\
& = a. 
\end{align*}
Hence this implies that $a_{r^{-1}u} = a_u$. In addition, for all $\phi \in \Phi$, 
\begin{align*}
 d_{\phi} (a) 
& = d_{\phi} \left( \sum_{u \in \theta (V^n)} a_u e_u \right) \\
& = \sum_{u \in \theta (V^n)} a_u d_{\phi} (e_u) \\
& = \sum_{u \in \theta (V^n)} a_u \exp (2 \pi \sqrt{-1} (\rho_G)_{\Phi} (\phi) (u)) e_u \\
& = a, 
\end{align*}
which means that $a_u = a_u \exp (2 \pi \sqrt{-1} (\rho_G)_{\Phi} (\phi) (u))$. 
Therefore, suppose $a_u \neq 0$, we have 
$\exp (2 \pi \sqrt{-1} (\rho_G)_{\Phi} (\phi) (u)) = 1$. Since $(\rho_G)_{\Phi} (\phi) (u) \in \QZ$, this yields $(\rho_G)_{\Phi} (\phi) (u) = 0$. 
Consequently, by applying Lemma \ref{Ru}, it follows that 
\[
 Ru = \{ru \in \theta (V^n) \mid r \in R \} 
\]
is a $G$-isotropic code and satisfies %the condition that 
\[
 \sum_{v \in Ru} a_v e_v = a_u \sum_{v \in Ru} e_v 
\]
for any $u \in \theta (V^n)$ with $a_u \neq 0$. 
Thus, we conclude that 
\begin{align*}
a & = \sum_{u \in \theta (V^n)} a_u e_u \\
& = \sum_{u \in \theta (V^n)} a_u \sum_{v \in Ru} e_v \\
& = \sum_{u \in \theta (V^n)} a_u \fwe_G (Ru). 
\end{align*}
\end{proof}

So, we have successfully generalized the results of \cite[Theorem 5.1.3, Remark 5.1.5]{NRS06}. Notably, our proof of Theorem \ref{parainv} offers a more direct approach than these theorems. 

Next, we define the transformations related to the duality of codes and introduce the Clifford--Weil group. 
Following \cite[Section 3]{NRS06}, We begin by defining the following algebraic structure. 

\begin{Def}

Let $(R, M, \psi)$ be a twisted ring. An element $\iota \in R$ is said to be a symmetric idempotent if it satisfies $\iota^2 = \iota$ and there exists an isomorphism of right $R$-modules: 
\[
 \iota R \cong \iota^J R, 
\]
where $r^J =  \psi^{-1} (\psi (r))$ for $r \in R$. 

\end{Def}

\begin{rem*}

If $\iota \in R$ is a symmetric idempotent, there exist elements 
$\mu_{\iota} \in \iota R \iota^J$ and $\nu_{\iota} \in \iota^J R \iota$ such that 
$\iota = \mu_{\iota} \nu_{\iota}$. 

\end{rem*}

To prove Theorem \ref{CWinv}, which asserts that the $G$-full weight enumerator of a $G$-self-dual isotropic code is invariant under the Clifford--Weil group, we employ the following lemma. This result is a $G$-analogue of \cite[Theorem 3.5.9]{NRS06}. 

\begin{lem} \label{iotaC}

Let $(R, M, \psi)$ be a twisted ring and let $\iota \in R$ be a symmetric idempotent. 
If a $G$-code $C$ is $G$-self-dual, then $\iota C$ is also $G$-self-dual. 

\end{lem}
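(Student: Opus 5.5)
The plan is to reduce everything to the untwisted situation on $V^t$ via the isomorphism $(-)_G \colon \theta(V^n) \to V^t$. Then I would argue that multiplication by $\iota$ preserves self-duality, relative to the pairing induced by $\rho_G$. The first step is to check that $\iota C$ is again a $G$-code. Since $\iota \in R$ acts on each coordinate and the action of $G$ only permutes coordinates, $\iota (g \cdot u) = g \cdot (\iota u)$. Hence $\iota C$ is an $R[G]$-submodule as soon as $C$ is; strictly, one needs $R\iota C \subset \iota C$, and I would take $\iota C$ to mean the $R$-span if necessary. The same commutation gives $\theta(\iota C) = \iota\, \theta C$, and $(\iota u)_G = \iota\, u_G$. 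By the Remark decomposing $\rho_G$ as an orthogonal sum of $t$ copies of $\rho'$, the statement becomes a statement about the submodule $D \ceq (\theta C)_G \subset V^t$ and its image $\iota D$, with respect to $\beta'^t$ and $\rho'^t_M$.

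The easy inclusion comes next. Since $\theta C$ is an $R$-module, $\iota\,\theta C \subset \theta C = (\theta C)^{(\perp_G)_{\rho_G}}$. Moreover, any $v$ orthogonal to $\theta C$ is orthogonal to the smaller set $\iota\,\theta C$. Hence
\[
 \iota\, \theta C \subset \theta C = (\theta C)^{(\perp_G)_{\rho_G}} \subset (\iota\, \theta C)^{(\perp_G)_{\rho_G}},
\]
so $\iota C$ is $G$-self-orthogonal. To move scalars across the form, I would use the twisted-module identities
\[
(\rho_G)_M(m)(ru \otimes v) = (\rho_G)_M(m(r\otimes 1))(u\otimes v), \qquad (\rho_G)_M(m)(u \otimes rv) = (\rho_G)_M(m(1\otimes r))(u\otimes v),
\]
together with $\psi$ and the anti-involution $r \mapsto r^J$. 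These identities show that the adjoint of multiplication by $r$ with respect to the family $\{(\rho_G)_M(m)\}$ is multiplication by $r^J$. This is the ring analogue of the transpose lemma preceding Theorem \ref{hay}.

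The reverse inclusion is the main obstacle. I would run the argument of Theorem \ref{hay} with $\theta$ replaced by the idempotent $\iota$ and $\theta^T$ replaced by $\iota^J$. That argument yields
\[
(\iota\,\theta C)^{(\perp_G)_{\rho_G}} = \ker(\iota^J) + \iota^J\big((\theta C)^{(\perp_G)_{\rho_G}}\big) = \ker(\iota^J) + \iota^J\,\theta C .
\]
To identify this with $\iota\,\theta C$, I would use the symmetric-idempotent hypothesis through the factorisation $\iota = \mu_\iota\nu_\iota$, with $\mu_\iota \in \iota R \iota^J$ and $\nu_\iota \in \iota^J R \iota$. Multiplication by $\mu_\iota$ and $\nu_\iota$ gives mutually inverse isomorphisms between $\iota^J\theta C$ and $\iota\,\theta C$ on the relevant summands. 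I would first aim for the cardinality equality $\abs{\iota\,\theta C}\cdot\abs{(\iota\,\theta C)^{(\perp_G)_{\rho_G}}} = \abs{\theta(V^n)}$, which should follow from nondegeneracy of $\beta$ (character-sum counting as in the proof of Theorem \ref{fweMac}) combined with the splitting $V^t = \iota V^t \oplus (1-\iota)V^t$. Comparing this equality with the inclusion already proved would then finish the argument. I expect this counting step, and in particular how the $\ker(\iota^J)$ summand is controlled by the symmetric-idempotent hypothesis, to be where the real work lies. If the direct identification fails, I would instead interpret the duality of $\iota C$ inside $\iota \theta(V^n)$, paired with $\iota^J\theta(V^n)$ via $\mu_\iota,\nu_\iota$, exactly as in \cite[Theorem 3.5.9]{NRS06}.
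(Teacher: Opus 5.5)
The paper states this lemma without proof (it only points to \cite[Theorem 3.5.9]{NRS06}), so the question is whether your argument proves a true statement. Its main line does not. You try to show $\iota\,\theta C=(\iota\,\theta C)^{(\perp_G)_{\rho_G}}$ inside all of $\theta(V^n)$, and that is false in general. Your self-orthogonality step is fine, but the reverse inclusion fails. Your own formula $\ker(\iota^J)+\iota^J\theta C$ for the dual already contains a summand $\ker(\iota^J)$ that has nothing to do with $C$. For the symmetric idempotent $\iota=0$ (with $\mu_\iota=\nu_\iota=0$), $\iota C=\{0\}$ and its dual is all of $\theta(V^n)$. The counting step cannot repair this. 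Nondegeneracy gives $\abs{\iota\,\theta C}\cdot\abs{(\iota\,\theta C)^{(\perp_G)_{\rho_G}}}=\abs{\theta(V^n)}=\abs{\theta C}^2$, so self-orthogonality upgrades to equality only if $\abs{\iota\,\theta C}=\abs{\theta C}$, i.e.\ only if $(1-\iota)\theta C=0$. Replacing $\iota C$ by its $R$-span also changes the object; $\iota\,\theta C$ is naturally an $\iota R\iota$-submodule of $\iota\,\theta(V^n)$, not an $R$-submodule.

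The true statement, and the one the proof of Theorem \ref{CWinv} actually needs for $h_{\iota,\mu_\iota,\nu_\iota}$, is your fallback. Put $W\ceq\iota\,\theta(V^n)$ and $\beta_\iota(x\otimes y)\ceq\beta(x\otimes\nu_\iota y)$, which is the pairing appearing in $h_{\iota,\mu_\iota,\nu_\iota}$. The claim is that $\iota\,\theta C$ is self-dual in $W$ with respect to $\beta_\iota$, and no counting is needed. For $x\in W$ and $c\in\theta C$ you have
\[
 \beta_\iota(x\otimes\iota c)=\beta(x\otimes\nu_\iota c),\qquad
 \nu_\iota\,\theta C=\iota^J\theta C,\qquad
 \beta(x\otimes\iota^J c)=\beta(\iota x\otimes c)=\beta(x\otimes c).
\]
These use $\nu_\iota\iota=\nu_\iota$, then $\nu_\iota=\iota^J\nu_\iota$ together with $\iota^J=\nu_\iota\mu_\iota$ (which comes from $\iota R\cong\iota^J R$), and finally the adjointness you invoke together with $\iota x=x$. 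Hence $x$ is $\beta_\iota$-orthogonal to $\iota\,\theta C$ if and only if $x\in(\theta C)^{(\perp_G)_{\rho_G}}=\theta C$. So the orthogonal of $\iota\,\theta C$ in $W$ is $W\cap\theta C=\iota\,\theta C$, the last equality because $\theta C$ is an $R$-module. You gesture at this reformulation in your last sentence but never carry it out, and the argument you actually outline targets the false ambient version.
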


\begin{Def}

Let $(R, M, \psi, \Phi)$ be a form ring and let 
$\rho_G = (\theta (V^n), (\rho_G)_M, (\rho_G)_{\Phi}, \beta)$ be a $G$-representation of $(R, M, \psi, \Phi)$.  
For a symmetric idempotent $\iota \in R$ with decomposition $\iota = \mu_{\iota} \nu_{\iota}$, we define the transformation $h_{\iota, \mu_{\iota}, \nu_{\iota}} \in \GL (\C [\theta (V^n)])$ by 
\begin{align*}
 h_{\iota, \mu_{\iota}, \nu_{\iota}} (e_u) =\abs{\iota (\theta (V^n))}^{- \frac{1}{2}} \sum_{v \in \iota (\theta (V^n))} \exp (2 \pi \sqrt{-1} \beta (v, \nu_{\iota} u)) e_{(1 - \iota) u + v}
\end{align*}
for each $u \in \theta (V^n)$. 

The Clifford--Weil group $\CW (\rho_G)$ is the subgroup of 
$\GL (\C [\theta (V^n)])$ generated by these transformations along with the parabolic generators: 
\[
 \CW (\rho_G) = \langle m_r, d_{\phi}, h_{\iota, \mu_{\iota}, \nu_{\iota}} 
\mid r \in R^*, \phi \in \Phi, \iota = \mu_{\iota} \nu_{\iota} \in R \text{ is a symmetric idempotent} \ \rangle. 
\]

\end{Def}

Finally, by extending the results of \cite[Theorem 5.5.1]{NRS06} to the $G$-invariant setting, we establish the following theorem. 

\begin{thm} \label{CWinv}

Let $G$ be a subgroup of the symmetric group of degree $n$, $(R, M, \psi, \Phi)$ a form ring, and $\rho_G = (\theta (V^n), (\rho_G)_M, (\rho_G)_{\Phi}, \beta)$ a $G$-representation of $(R, M, \psi, \Phi)$. If a $G$-code $C$ is both $G$-self-dual and $G$-isotropic (with respect to $\rho_G$), then its $G$-full weight enumerator $\fwe_G (\theta C)$ is invariant under the action of the Clifford--Weil group $\CW (\rho_G)$. 

\end{thm}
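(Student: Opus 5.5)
Since $\CW (\rho_G)$ is generated by the $m_r$, the $d_\phi$ and the $h_{\iota, \mu_\iota, \nu_\iota}$, it suffices to check invariance of $\fwe_G (\theta C)$ under each generator. A $G$-self-dual isotropic code is in particular $G$-isotropic, so invariance under $m_r$ and $d_\phi$ is exactly the first half of the proof of Theorem \ref{parainv}. The remaining work is to show
\[
 h_{\iota, \mu_\iota, \nu_\iota} (\fwe_G (\theta C)) = \fwe_G (\theta C)
\]
for every symmetric idempotent $\iota = \mu_\iota \nu_\iota$. I will follow the scheme of \cite[Theorem 5.5.1]{NRS06}, carried out inside $\theta (V^n)$ via the identification $(-)_G$.

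First, I decompose $\theta (V^n) = \iota \theta (V^n) \oplus (1 - \iota) \theta (V^n)$. Expanding the definition gives
\[
 h (\fwe_G (\theta C)) = \abs{\iota \theta (V^n)}^{-1/2} \sum_{w \in \theta (V^n)} \Bigl( \sum_{\substack{u \in \theta C \\ (1 - \iota) u = (1 - \iota) w}} \exp (2 \pi \sqrt{-1} \beta (\iota w, \nu_\iota u)) \Bigr) e_w .
\]
For fixed $w$, the admissible $u$ form a coset $u_0 + (\theta C \cap \iota \theta (V^n))$, or the set is empty. Using $\iota = \mu_\iota \nu_\iota$ and the twisted-module compatibility of $\beta$, I will rewrite $\beta (\iota w, \nu_\iota x)$ for $x \in \iota \theta C$ as $\beta (w', x)$, where $w'$ is an element obtained from $w$ by $\mu_\iota$ and the involution $J$. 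The inner sum then becomes a character sum over the subgroup $\theta C \cap \iota \theta (V^n)$. By character orthogonality, as in the proof of Theorem \ref{fweMac}, this sum is either $0$ or $\abs{\theta C \cap \iota \theta (V^n)}$ times a phase.

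Next, I will use Lemma \ref{iotaC} together with the $G$-self-duality $\theta C = (\theta C)^{(\perp_G)_{\rho_G}}$ to identify the support. The aim is to show that the $w$ with a nonzero inner sum are exactly the elements of $\theta C$, and that for them the phase $\exp (2 \pi \sqrt{-1} \beta (\iota w, \nu_\iota u_0))$ equals $1$. This is where isotropy enters: the value $\beta (x, \nu_\iota x)$ for $x \in \theta C$ should be expressible through $(\rho_G)_\Phi$ of a suitable element of $\Phi$ (via $\psi$ and the form-ring axioms, as in \cite{NRS06}), and that value vanishes on $\theta C$.

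The normalization is a counting step. It requires
\[
 \abs{\theta C \cap \iota \theta (V^n)} \cdot \abs{\iota \theta (V^n)}^{-1/2} = 1 ,
\]
that is, $\abs{\iota \theta C}^2 = \abs{\iota \theta (V^n)}$ up to the appropriate identification. I will get this from the self-duality of $\iota C$ given by Lemma \ref{iotaC}, through nondegeneracy of the pairing between $\iota \theta (V^n)$ and $\iota^J \theta (V^n)$ induced by $\beta$ and the isomorphism $\iota R \cong \iota^J R$.

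I expect the main obstacle to be the middle step: translating the twisted-ring identities ($\mu_\iota \in \iota R \iota^J$, $\nu_\iota \in \iota^J R \iota$, and $\psi$) into the identities for the $G$-bilinear form $\beta$ on $\theta (V^n)$. My first attempt will be to reduce everything to the single-copy representation $\rho'$ on $V$ from the remark on orthogonal sums. Under $(-)_G$, $\rho_G$ is the orthogonal sum of $t$ copies of $\rho'$, and $\theta C$ becomes an $R$-submodule of $V^t$ that is self-dual and isotropic for $\rho'^t$. The claim would then follow directly from \cite[Theorem 5.5.1]{NRS06} applied with $N = t$, once one checks that $h_{\iota, \mu_\iota, \nu_\iota}$ corresponds to the usual Clifford--Weil generator under this isomorphism.
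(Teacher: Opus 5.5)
Your proposal is correct. Its outer structure matches the paper: the parabolic generators are handled by the first half of Theorem \ref{parainv}, and $h_{\iota,\mu_\iota,\nu_\iota}$ by character orthogonality, $G$-self-duality and a counting step. You treat that key generator differently, though.

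The paper never computes with a general $\iota$. It cites Lemma \ref{iotaC} to reduce to $\iota=\mu_\iota=\nu_\iota=1$. Then $h_{1,1,1}$-invariance is just Theorem \ref{fweMac} applied to $\theta C=(\theta C)^{(\perp_G)_{\rho_G}}$, combined with the normalization $\abs{\theta C}^{-1}=\abs{\theta(V^n)}^{-1/2}$, which it uses without comment.

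You instead split $\theta(V^n)=\iota\theta(V^n)\oplus(1-\iota)\theta(V^n)$ and run the orthogonality argument on the $\iota$-component, using the pairing $(z,c)\mapsto\beta(z,\nu_\iota c)$. This is exactly the content hidden in the paper's one-line reduction. The paper does not say in which space, or for which form, $\iota C$ is self-dual. What is actually needed is self-duality of $\iota\theta C$ inside $\iota\theta(V^n)$ for this twisted pairing, which is the reading you adopt. So your route is longer but self-contained, and it makes the required count $\abs{\iota\theta C}^2=\abs{\iota\theta(V^n)}$ explicit.

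Your fallback is the cleanest rigorous route: transport everything along $(-)_G$ to $V^t$, with the orthogonal sum of $t$ copies of $\rho'$, and quote the Nebe--Rains--Sloane invariance theorem with $N=t$. Its cost is that it bypasses the paper's $G$-MacWilliams identity entirely.

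Two small points in your middle step:
\begin{enumerate}
\item State explicitly that $\theta C$ is an $R$-submodule, since the $R$-action is diagonal and commutes with $G$. Then $\iota\theta C=\theta C\cap\iota\theta(V^n)$ and $\theta C=\iota\theta C\oplus(1-\iota)\theta C$, and this is what makes the support exactly $\theta C$.
\item The phase does not need isotropy. For $w\in\theta C$ you may take $u_0=w$. Then $\beta(\iota w,\nu_\iota w)=0$ already follows from $G$-self-orthogonality, because $\iota w$ and $\nu_\iota w$ both lie in $\theta C$. Your detour through $\Phi$ is harmless, but isotropy is really only needed for the generators $d_\phi$.
\end{enumerate}
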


\begin{proof}

From Theorem \ref{parainv}, it suffices to show that $\fwe_G (\theta C)$ is invariant under $h_{\iota, \mu_{\iota}, \nu_{\iota}}$ for any symmetric idempotent $\iota \in R$. According to Lemma \ref{iotaC}, the problem reduces to the case $\iota = 1$ and 
$\mu_{\iota} = \nu_{\iota} = 1$. Hence, by Theorem \ref{fweMac}, we obtain 
\begin{align*}
 \fwe_G (\theta C)
& = \fwe_G ((\theta C)^{(\perp_G )_{\rho_G}}) \\
& = \frac{1}{\abs{\theta C}} \sum_{v \in \theta (V^n)} \sum_{u \in \theta C} \exp (2 \pi \sqrt{-1} \beta (v, u)) e_v \\
& = \abs{\theta (V^n)}^{- \frac{1}{2}} \sum_{v \in \theta (V^n)} 
\sum_{u \in \theta C} \exp (2 \pi \sqrt{-1} \beta (v, u)) e_v \\
& = \sum_{u \in \theta C} \abs{\theta (V^n)}^{- \frac{1}{2}} 
\sum_{v \in \theta (V^n)} \exp (2 \pi \sqrt{-1} \beta (v, u)) e_v \\
& = \sum_{u \in \theta C} h_{1, 1, 1} (e_u) \\
& = h_{1, 1, 1} (\sum_{u \in \theta C} e_u) \\
& = h_{1, 1, 1} (\fwe_G (\theta C)). 
\end{align*}
\end{proof}

So, we conclude that the $G$-full weight enumerator of a $G$-self-dual isotropic code is invariant under the Clifford--Weil group. 

Furthermore, based on Theorem \ref{parainv}, we are led to the following fundamental question regarding the span of weight enumerators: 

\vspace{7mm}

\noindent \textbf{Future Work.}

Let $\rho_G = (\theta (V^n), (\rho_G)_M, (\rho_G)_{\Phi}, \beta)$ be a $G$-representation of a form ring $(R, M, \psi, \Phi)$. 
Do the vectors $\fwe_G (\theta C)$, where $C$ ranges over all $G$-self-dual isotropic codes (with respect to $\rho_G$), span the invariant subspace of $\C [\theta (V^n)]$ under the action of $\CW (\rho_G)$? 

\begin{que}
Does the following equality hold: 
\[
 \C [\fwe_G (\theta C) \mid \text{$C$ is a $G$-self-dual isotropic code (with respect to $\rho_G$)}] = \C [\theta (V^n)]^{\CW (\rho_G)} ? 
\]
\end{que}

This question is a natural extension of the \textit{The Weight Enumerator Conjecture} (Conjecture 5.5.2. (1) in \cite{NRS06}) to the setting of $G$-representations. From Theorem \ref{CWinv}, we have already established the forward inclusion: 
\[
 \C [\fwe_G (\theta C) \mid \text{$C$ is a $G$-self-dual isotropic code (with respect to $\rho_G$)}] 
\subset \C [\theta (V^n)]^{\CW (\rho_G)}
\]
So, the essence of the problem lies in determining whether the reverse inclusion holds: 
\[
 \C [\fwe_G (\theta C) \mid \text{$C$ is a $G$-self-dual isotropic code (with respect to $\rho_G$)}] \supset \C [\theta (V^n)]^{\CW (\rho_G)}. 
\]

\end{document}